\newtheorem{thm}{Theorem}[section]
\newtheorem{pro}[thm]{Proposition}
\newtheorem{lm}[thm]{Lemma}
\newtheorem{cor}[thm]{Corollary}
\numberwithin{equation}{section}
\newtheorem{hyp}[thm]{Working Hypothesis}
\theoremstyle{definition}
\theoremstyle{remark}
\newtheorem{rem}[thm]{Remark}
\numberwithin{equation}{section}
\DeclareMathOperator*{\el}{ell}
\DeclareMathOperator*{\Irr}{Irr}
\DeclareMathOperator*{\disc}{disc}
\DeclareMathOperator*{\Gal}{Gal}
\DeclareMathOperator*{\Res}{Res}
\DeclareMathOperator*{\temp}{temp}
\DeclareMathOperator*{\ad}{ad}
\DeclareMathOperator*{\scn}{sc}
\DeclareMathOperator*{\der}{der}
\DeclareMathOperator*{\Hom}{Hom}
\newcommand{\vp}{\varphi}
\newcommand{\s}{\simeq}
\newcommand{\si}{\sigma}
\newcommand{\ts}{\sigma^{\sharp}}
\newcommand{\teta}{\eta^\sharp}
\newcommand{\tchi}{\chi^\sharp}
\newcommand{\tphi}{\phi^\sharp}
\newcommand{\CC}{\mathbb{C}}
\newcommand{\NN}{\mathbb{N}}
\newcommand{\ii}{\mathbf{\textit{i}}}
\def\bA{\bold A}
\def\bG{\bold G}
\def\bP{\bold P}
\def\bM{\bold M}
\def\bN{\bold N}
\def\bA{\bold A}
\def\bG{\bold G}
\def\bP{\bold P}
\def\bM{\bold M}
\def\bN{\bold N}
\def\bJ{\bold J}
\def\L{\mathcal L}
\DeclareMathOperator*{\Sp}{Sp}
\DeclareMathOperator*{\SU}{SU}
\DeclareMathOperator*{\U}{U}
\DeclareMathOperator*{\SO}{SO}
\DeclareMathOperator*{\SL}{SL}
\DeclareMathOperator*{\GL}{GL}
\DeclareMathOperator*{\GSO}{GSO}
\DeclareMathOperator*{\GSp}{GSp}
\DeclareMathOperator*{\Spin}{Spin}
\DeclareMathOperator*{\GSpin}{GSpin}
\newcommand{\tG}{G^{\sharp}}
\newcommand{\tbG}{\bold G^{\sharp}}
\newcommand{\tP}{P^\sharp}
\newcommand{\tM}{M^\sharp}
\newcommand{\tbM}{\bold M^\sharp}
\def\L{\mathcal L}
\begin{document}

\title[On compatibility of Arthur's conjecture for $R$-groups]{On compatibility in restriction of Arthur's conjecture for $R$-groups}

\author[Kwangho Choiy]{Kwangho Choiy} 
\address{School of Mathematical and Statistical Sciences,
Southern Illinois University,
Carbondale, IL 62901-4408,
U.S.A.}
\email{kchoiy@siu.edu}

\subjclass[2020]{Primary \textbf{22E50}; Secondary 11F70, 22E55, 22E35}

\begin{abstract}
We study the compatibility of Arthur's conjecture for $R$-groups in the restriction of discrete series representations from Levi subgroups of a $p$-adic group to those of its closed subgroup having the same derived group. The compatibility is conditional as it holds under the conjectural local Langlands correspondence in general. This work is applied to several cases and further suggests a uniform way to verify the Arthur's conjecture for $R$-groups in the setting.
\end{abstract}
\maketitle

\section{Introduction} \label{intro}

Arthur's conjecture for $R$-groups proposes an isomorphism between the Knapp-Stein and Langlands-Arthur $R$-groups in \cite{art89ast}. It has been proved for many cases over some decades and contributes to comparison of trace formulae and to the endoscopic classification of automorphic representations \cite{art12, kmsw14, mok13}.

The purpose of the paper is to investigate the compatibility of Arthur's conjecture for $R$-groups, when one restricts discrete series representations from Levi subgroups of a connected reductive algebraic group over a $p$-adic field to those of its closed subgroup having the same derived group. Our result is conditional on the conjectural local Langlands correspondence in general and a working hypothesis.  Along with applications to cases when those assumptions are verified, the compatibility further suggests a uniform way to verify the Arthur's conjecture for $R$-groups in the setting. To this end, we shall verify several group structural statements in both $p$-adic and $L$-group sides. Alongside properties of Plancherel measures, we also study stabilizers in Weyl group of representations and $L$-parameters, and centralizers and normalizers of $L$-parameters. We then related them to each other under some working hypotheses including the local Langlands correspondences in general. Certain finite abelian groups of continuous characters, and Galois cohomology groups will work together in some steps and in several exact sequences consisting of groups produced over $p$-adic groups and their $L$-groups.

To be precise, we let $\tbG$ be a connected reductive group over a $p$-adic field $F,$ 
and let $\bG$ be a closed $F$-subgroup of $\tbG$ with the following  property that the inclusions of algebraic groups
\begin{equation} \label{cond on G intro}
\bG_{\der} = \tbG_{\der} \subseteq \bG \subseteq \tbG
\end{equation}  
hold, where the subscript ${\der}$ stands for the derived group. Then we have the following exact sequence
\[
1 \longrightarrow \widehat{\tG/G} \longrightarrow \widehat \tG \longrightarrow \widehat G  \longrightarrow  1,
\]
where 
$\widehat G, \widehat \tG,  \widehat{\tG/G}$ are denoted by the connected components of the $L$-group of $\bG, \tbG/\bG, \tbG$ (see Section \ref{llc} for the details), and $\widehat{\tG/G}$ is a central torus in $\widehat \tG$ (refer to Remark \ref{rem Labesse situation}). 

Let $\bM$ be an $F$-Levi subgroup of $\bG,$ and let $\tbM$ be an $F$-Levi subgroup of $\tbG$ such that $\tbM = \bM \cap \bG.$ Given $\phi \in \Phi(M),$ there is a lifting $\tphi \in \Phi(\tM)$ such that
\begin{equation} \label{projection}
\phi = pr \circ \tphi,
\end{equation}
where $pr$ is the projection  $\widehat{\tM} \twoheadrightarrow \widehat{M}.$ 

This is due to Remark \ref{rem Labesse situation} and \cite[Th\'{e}or\`{e}m 8.1]{la85}, and the further details are given in Section \ref{a key proposition}.

Now, we shall state our assumption (Working Hypothesis \ref{workhyp}) with a connected reductive group $\boldsymbol{J}$ over $F$ and its $F$-Levi subgroup $\bM_{\bJ}.$
Under the validity of the local Langlands correspondence for tempered representations of $J$ and for discrete series representations of $M_J,$ given $\phi_J \in \Phi_{\disc}(M_J)$ and $\sigma_J \in \Pi_{\phi}(M_J),$ we have
\begin{equation} \label{whp equ}
W_{\sigma_J}^\circ = W_{\sigma_J, \phi_J} ^\circ,
\end{equation}
where $W_{\phi_J}$ is considered as a subgroup of $W_{M_J}.$ We refer the reader to relevant definitions in Sections \ref{llc} and \ref{rgp arthur conj}.

Given $\ts \in \Pi_{\tphi}(\tM)$ and $\sigma \in \Pi_{\phi}(M),$ under the validity of the hypotheses for $\bJ=\tbG,$
we have the following (Theorem \ref{thm from tG})
\begin{equation}  \label{arthur conj for tG intro}
R_{\ts} \s R_{\tphi, \ts}, ~~ \text{ and } ~~ R_{\sigma} \s R_{\phi, \sigma}.
\end{equation}

On the other hand, we let $\phi \in \Phi_{\disc}(M)$ be given.  Choose a lifting $\tphi \in \Phi_{\disc}(\tM)$ such that $\phi = pr \circ \tphi,$ with the projection $pr: \widehat{\tM} \twoheadrightarrow \widehat{M}.$
Given $\ts \in \Pi_{\tphi}(\tM)$ and $\sigma \in \Pi_{\phi}(M),$ under the validity of the hypotheses for $\bJ=\bG,$ we have those isomorphism \eqref{arthur conj for tG intro} (Theorem \ref{thm from G}).

Two key statements (Propositions \ref{lm for identity comp} and \ref{a key prop}) among others that are utilized to prove \eqref{arthur conj for tG intro} under the hypothesis for both $\bJ=\bG$ and $\bJ=\tbG$ are
\[
\bar C_{\tphi}^\circ =  C_{\phi_{\scn}}(\widehat{G_{\scn}})^\circ = \bar C_{\phi}^\circ ~~ \text{ and } ~~
W^{\circ}_{\tphi} = W^{\circ}_{\phi}.
\]
These essentially guide us in working in the adjoint group of their $L$-groups. 
Using the given condition \eqref{cond on G intro}, those equalities above stem mainly from studies of several relationships between $\tbG$ and $\bG$ and between their $L$-groups, as well as those between  $\tbM$ and $\bM$ and between their $L$-groups.  Moreover, it is crucial to use the following finite group
\begin{equation*} \label{def of X intro}
X^{\tG}(\tphi) := \{ \bold a \in H^1(W_F,\widehat{\tG/G}) : \bold a \tphi \s \tphi  \},
\end{equation*}
which lies in the following exact sequence
\[
1 \longrightarrow C_{\tphi}(\widehat{\tG})/Z(\widehat{\tG})^\Gamma \longrightarrow C_{\phi_{\scn}}(\widehat{G_{\scn}}) \longrightarrow X^{\tG}(\tphi) \longrightarrow 1
\]
(see (\eqref{def of X}).
Using the property of Plancherel measures,
we have
\begin{equation*} \label{more intro}
W^{\circ}_{\ts} = W^{\circ}_{\sigma} = W^{\circ}_{\tphi, \ts}.
\end{equation*}
Due to the inclusion $W_{\tphi} \cap W(\sigma) \subset W(\ts)$ (Lemma \ref{lm imp1}), the above key group structural statements, and definitions, we have 
\[
W^\circ_{\phi, \sigma} = W^\circ_\sigma
\]
(see \eqref{equal for si}).
This is combined with the equality $W(\sigma) = W_{\phi, \sigma}$ (\eqref{reduction for W}), which comes from the inclusion $W(\sigma) \subset W_\phi$ (Lemma \ref{a inclusion on W}). 
This proves the case $R_{\sigma} \s R_{\phi, \sigma}$ in \eqref{arthur conj for tG intro} from the hypothesis above for $\bJ=\tbG.$ The other cases are verified in a similar way. The full detail is given in Sections \ref{sec arthur conj from tG} and \ref{sec arthur conj from G}.

We remark that the method turns out to rely on group structural arguments and properties of Plancherel measures, both of which are \textit{unconditional}. However, the hypothesis above is required as we need to transfer arguments between representations and $L$-parameters. This machinery generalizes an idea in \cite{cgsu} for the case that $\tbG=\U_n$ and $\bG=\SU_n$ and their non-quasi-split $F$-inner forms,
and then it is now applied to many cases such as $(\tbG,\bG)=(\GL_n,\SL_n),$ $(\GSp_{2n}, \Sp_{2n}),$ $(\GSO_n, \SO_n),$ $(\GSpin_n, \Spin_n),$ and their $F$-inner forms (refer to Section \ref{sec clos rem}).

In Section \ref{prelim}, we recall basic notation and background, discuss local Langlands conjectures for tempered representations in general, and review three $R$-groups and Arthur's conjecture. In Section \ref{main thm section}, we prove Arthur's conjectures for $R$-groups for both $\tG$ and $G,$ in two directions: one is under the working hypothesis for $\bJ=\tbG;$ and the other is under the working hypothesis for $\bJ=\bG.$ Also, several group structural arguments in both $p$-adic and $L$-group sides are discussed.


\section{Preliminaries} \label{prelim}
Throughout the paper, we denote by $F$ a $p$-adic field of characteristic $0$ with an algebraic closure $\bar{F}.$  By $W_F$ we denote the Weil group of $F$ and by $\Gamma$ the absolute Galois group $\Gal(\bar{F} / F).$ 
\subsection{Notation and basic backgrounds} \label{notation}
Let $\bG$ be a connected reductive algebraic group defined over $F.$ We write $G$ for the group $\bG(F)$ of $F$-points and for other algebraic groups defined over $F.$
Fix a minimal $F$-parabolic subgroup $\bP_0$ of $\bG$ with Levi decomposition $\bP_0=\bM_0 \bN_0.$ 
We denote by $\bA_0$ the split component of $\bM_0$ and 
by $\Delta$ the set of simple roots of $\bA_0$ in $\bN_0.$ 
Let $\bP$ be an $F$-parabolic subgroup with Levi decomposition $\bP=\bM\bN$ such that $\bM \supseteq \bM_0$ and $\bN \subseteq \bN_0.$
We note that there is a subset $\Theta \subseteq \Delta$ such that $\bM$ equals the Levi subgroup $\bM_{\Theta}$ generated by $\Theta.$
Set $\bA_{\bM_{\Theta}}=\bA_{\bM}$ for the split component of $\bM=\bM_{\Theta}.$  
We write $\Phi(P, A_M)$ for the set of reduced roots of $\bP$ with respect to $\bA_\bM.$  Set  $W_M = W(\bG, \bA_\bM) := N_\bG(\bA_\bM) / Z_\bG(\bA_\bM)$ the Weyl group of $\bA_\bM$ in $\bG,$ 
where, $N_\bG(\bA_\bM)$ and $Z_\bG(\bA_\bM)$ are the normalizer and centralizer of $\bA_\bM$ in $\bG,$ respectively. 

The set of isomorphism classes of irreducible admissible complex representations of $G$ is denoted by $\Irr(G).$ For simplicity, we do not distinguish each isomorphism class from its representative. 
Let $\sigma \in \Irr(M)$ be given. Write $\ii_{G,M} (\sigma)$ for the normalized (twisted by $\delta_{P}^{1/2}$) induced representation, where $\delta_P$ denotes the modulus character of $P.$ 
Denote by $\sigma^{\vee}$ the contragredient of $\sigma.$
The subset consisting of discrete series (respectively, tempered representations) in $\Irr(G)$ are denoted by $\Pi_{\disc}(G)$ (respectively, $\Pi_{\temp}(G)$).  
Here, a discrete series representation stands for an irreducible, admissible, unitary representation whose matrix coefficients are square-integrable modulo the center of $G,$ that is, in $L^2(G/Z(G)),$
and a tempered representation means an irreducible, admissible, unitary representation whose matrix coefficients are in $L^{2+\epsilon}(G/Z(G))$ for all $\epsilon > 0.$

Given any topological group $H$ and its subset $I,$ we denote by $Z(H)$ the center of $H$ and by $Z_H(I)$ (resp., $N_H(I)$) the centralizer (resp., normalizer) of $I$ in $H.$
We denote by $H^\circ$ the identity component of $H,$ and by $\pi_0(H)$ the group $H/H^\circ$ of connected components of $H.$ 
By $H_{\ad}=H/Z(H)$ we denote the adjoint group  of $H,$ and by $H_{\scn}$ the simply-connected cover of the derived group $H_{\der}$ of $H.$
Set $H^D:=\Hom(H , \CC^{\times}),$ the group of all continuous characters.
Given a Galois module $J,$ for $i \in \NN$ we set $H^i(F, J) := H^i(\Gal (\bar{F} / F), J(\bar{F})),$  
the Galois cohomology of $J.$

\subsection{Local Langlands conjecture for tempered representations in a general setting} \label{llc}
We follow \cite[Section 2]{bo79} to state the local Langlands conjecture for tempered representations in a general setting. 
Let $\bG$ be a connected reductive algebraic group over $F.$ 
Fixing $\Gamma$-invariant splitting data, we define the $L$-group of $G$ as a semi-direct product $^{L}G := \widehat{G} \rtimes W_F.$ 
As in Section 8.2 of \textit{loc. cit.}, we say an $L$-parameter for $G$ is an admissible homomorphism 
\[
\vp: W_F \times SL_2(\CC) \rightarrow {^L}G.
\]
Note that two $L$-parameters are said to be equivalent if they are conjugate by $\widehat{G}.$
The set of equivalence
classes of $L$-parameters for $G$ is denoted by $\Phi(G).$
An $L$-parameter $\vp$ is said to be tempered if $\vp(W_F)$ is bounded, and we denote by $\Phi_{\temp}(G)$ the subset of $\Phi(G)$ which consist of tempered $L$-parameters of $G.$
The local Langlands conjecture for tempered representations of $G$ predicts that there is a surjective finite-to-one map 
\begin{equation} \label{pre temp llc}
\L:{\Irr}_{\temp}(G) \rightarrow \Phi_{\temp}(G).
\end{equation}
Let $\vp \in \Phi_{\temp}(G)$ be given. 
By $\Pi_{\vp}(G):=\L^{-1}(\vp)$ we denote the $L$-packet attached to $\vp.$ 
The map \eqref{pre temp llc} implies that
\begin{equation} \label{temp llc}
{\Irr}_{\temp}(G) = \bigsqcup_{\vp \in {\Phi}_{\temp}(G)}\Pi_{\vp}(G).
\end{equation}

Given $\vp \in \Phi_(G),$ we write $C_{\vp}(\widehat{G})$ for the centralizer of the image of $\vp$ in $\widehat{G}.$
We note that $C_{\vp}$ contains the center $Z(\widehat{G})^{\Gamma}$  of $^{L}G.$
An $L$-parameter $\vp$ is said to be elliptic if the quotient group $C_{\vp}(\widehat G) / Z(\widehat{G})^{\Gamma}$
is finite.
The subset in $\Phi(G)$ consisting of elliptic $L$-parameters is denoted by $\Phi_{\el}(G).$ Set $\Phi_{\disc}(G) = \Phi_{\el}(G) \cap \Phi_{\temp}(G).$
Then, the map \eqref{pre temp llc} and disjoint union \eqref{temp llc} both hold between $\Pi_{\disc}(G)$ and $\Pi_{\temp}(G).$
\subsection{Three $R$-groups and Arthur's conjecture} \label{rgp arthur conj}
Following \cite{art89ast, goldberg-class, ks72, sil78, cgsu}, we recall Knapp-Stein, Langlands-Arthur, and endoscopic $R$--groups, and discuss Arthur's conjecture for $R$-groups. 

Let $\bG$ be a connected reductive algebraic group over $F$ and $\bM$ an $F$-Levi subgroup of $\bG.$
For $\sigma \in \Pi_{\disc}(M)$ and $w \in W_M,$ 
the representation ${^w}\sigma$ twisted by $w$ is given by 
\[
{^w}\sigma(x)=\sigma(w^{-1}xw).
\]
We note that the isomorphism class of ${^w}\sigma$ is independent of the choices of representatives in $G$ of $w \in W_M,$ and  do not distinguish an element in the normalizer $N_\bG(\bA_\bM)$ and its representative in $W_M.$ 
Let $\sigma \in \Pi_{\disc}(M)$ be given. By $W(\sigma)$ we denote the stabilizer 
\[
\{ w \in W_M : {^w}\sigma \s \sigma \}
\]
of $\sigma$ in $W_M.$
We write $\Delta'_\sigma$ for the subset of $\Phi(P, A_M)$ consisting of $\alpha$ such that $\mu_{\alpha} (\sigma) = 0 \},$ where $\mu_{\alpha} (\sigma)$ is the rank one Plancherel measure for $\sigma$ attached to $\alpha$ \cite[p.1108]{goldberg-class}. 

We  define \textit{Knapp-Stein $R$-group} $R_{\sigma}$ to be
\[
\{ w \in W(\sigma) : w \alpha > 0, \; \forall \alpha \in \Delta'_\sigma \}.
\]
The subgroup of $W(\sigma)$ generated by the reflections in the roots of $\Delta'_\sigma$ is denoted by  $W^{\circ}_{\sigma}.$ 
It then follows that
\[
W(\sigma) = R_\sigma \ltimes W^{\circ}_{\sigma},
\]
and in turn we have
\[
R_\sigma \s  W(\sigma)/W^{\circ}_{\sigma}.
\]

Now, via the inclusion $\widehat M \hookrightarrow \widehat G,$
it is natural that an $L$-parameter $\phi \in \Phi_{\disc}(M)$ is also considered as an $L$-parameter for $G.$ 
Fixing a maximal torus $T_{\phi}$ in $C_{\phi}(\widehat{G})^{\circ},$ 
we set
\begin{eqnarray} 
\nonumber 
W_{\phi}^{\circ} &:=& N_{C_{\phi}(\widehat{G})^{\circ}} (T_{\phi}) /  Z_{C_{\phi}(\widehat{G})^{\circ}} (T_{\phi}), \\
\nonumber 
 \quad W_{\phi} &:=& N_{C_{\phi}(\widehat{G})} (T_{\phi}) /  Z_{C_{\phi}(\widehat{G})} (T_{\phi}).
\end{eqnarray} 

We  define the \textit{endoscopic $R$-group} $R_{\phi}$ as the quotient
\[
W_{\phi}/W_{\phi}^{\circ}.
\]
Identifying $W_{\phi}$ with a subgroup of $W_M$ (see \cite[p.45]{art89ast}), 
for $\sigma \in \Pi_{\phi}(M),$ we set
\begin{eqnarray*} \label{def of W_phi, sigma}
W_{\phi, \sigma}^{\circ} &:=&  W_{\phi}^{\circ} \cap W(\sigma),  \\
\quad  W_{\phi, \sigma} &:=&  W_{\phi} \cap W(\sigma).
\end{eqnarray*} 

We  define the \textit{Langlands-Arthur $R$-group} $R_{\phi, \sigma}$ to be the quotient
\[
W_{\phi, \sigma}/W_{\phi, \sigma}^{\circ}.
\]
It follows from \cite[p.46]{art89ast} that $R_{\phi, \sigma} \subset R_{\phi}.$

Arthur's conjecture for $R$-groups predicts that given a discrete series representation $\sigma \in \Pi_{\phi}(M),$ we have
\begin{equation} \label{art conj}
R_{\sigma} \s R_{\phi, \sigma}.
\end{equation}
The conjecture has been proved for various cases by Arthur, Ban-Goldberg, Ban-Zhang, Choiy-Goldberg, Mok, and others \cite{art12, bg12, bggspin,  baj05,  chaoli, cgsu, chgo12, cgclassic, mok13}.

\section{Main theorems - Compatibility of Arthur's conjecture for $R$-groups in restriction} \label{main thm section}

This section is devoted to presenting our main theorems on the compatibility of Arthur's conjecture for $R$-groups in restriction. Throughout the section, we let $\tbG$ be a connected reductive group over $F,$ 
and let $\bG$ be a closed $F$-subgroup of $\tbG$ with the following property that the inclusions of algebraic groups
\begin{equation}  \label{cond on G}
\bG_{\der} = \tbG_{\der} \subseteq \bG \subseteq \tbG
\end{equation}  
hold. 
Here, the subscript ${\der}$ stands for the derived group.
\begin{rem} \label{rem Labesse situation}
We note that \eqref{cond on G} implies the following exact sequence
\[
1 \longrightarrow \widehat{\tG/G} \longrightarrow Z(\widehat \tG)  \longrightarrow Z(\widehat G)  \longrightarrow  1,
\]
due to \cite[(1.8.1)]{kot84}, and
\[
1 \longrightarrow \widehat{\tG/G} \longrightarrow \widehat \tG \longrightarrow \widehat G  \longrightarrow  1,
\]
referring to \cite[p.58]{chaoli} (note that their notation is different from ours).
It then follows that $ \widehat{\tG/G}$ is a central torus in $\widehat \tG.$ Thus, we have the following commutative diagram
\begin{equation} \label{important comm diag for G}
\begin{CD}
@.  1 @. 1 @.  @.
\\
@.      @VVV        @VVV   @. @.\\
1 @>>>  \widehat{\tG/G}@>{\s}>> \ker @>>> 1 @. @. 
\\
@.      @VVV        @VVV   @VVV  @.\\
1 @>>> Z(\widehat{\tG}) @>>> \widehat{\tG} @>>> (\widehat{\tG})_{ad} @>>> 1 
\\
@.      @VVV          @VVV   @|  @.
\\
1 @>>> Z(\widehat{G})  @>>> \widehat{G} @>>> (\widehat{G})_{ad}@>>> 1 \\
@.      @VVV          @VVV   @VVV  @.\\
@.  1 @. 1 @. 1 @. 
\end{CD}
\end{equation}
The condition \eqref{cond on G} is satisfied in many cases such as $(\tbG,\bG)=(\GL_n,\SL_n),$ $(\GSp_{2n}, \Sp_{2n}),$ $(\GSO_n, \SO_n),$ $(\GSpin_n, \Spin_n),$ $(\U_n, \SU_n)$ and their $F$-inner forms. 
\end{rem}

We will prove Arthur's conjectures for $R$-groups for both $\tG$ and $G,$ provided that Working Hypothesis \ref{workhyp} for $\bJ=\tbG$ is valid (Theorem \ref{thm from tG}). We will also verify the same statement, provided that Working Hypothesis \ref{workhyp} for $\bJ=\bG$ is valid (Theorem \ref{thm from G}).
The method in the proof relies on group structural arguments in $L$-groups in Section \ref{a key proposition} and properties of Plancherel measures, both of which are \textit{unconditional}. However, the hypothesis is required as we need to transfer arguments between representations and $L$-parameters. This machinery generalizes an idea in \cite{cgsu} for the case that $\tbG=\U_n$ and $\bG=\SU_n$ with respect to a quadratic extension $E$ over $F,$ and their non-quasi-split $F$-inner forms.

\subsection{Group structural argument in $L$-groups} \label{a key proposition}
Let $\bM$ be an $F$-Levi subgroup of $\bG,$ and let $\tbM$ be an $F$-Levi subgroup of $\tbG$ such that $\tbM = \bM \cap \bG.$
We note from \cite[(1.8.1) p.616]{kot84} that the exact sequence of algebraic groups
\begin{equation} \label{lgps for M}
1 \longrightarrow \bM \longrightarrow \tbM \longrightarrow \tbM/\bM \longrightarrow 1.
\end{equation}
\begin{rem} \label{twoquotients}
The inclusion $\bM \hookrightarrow \bG$ induces an isomorphism as $F$-tori
\[
\tbM/\bM \s \tbG/\bG,
\]
cf., \cite[Proposition 3.2.3]{chaoli} (note that their $~^\sharp$ indicates smaller groups).
\end{rem}
\begin{rem}
Due to Remark \ref{rem Labesse situation} and \eqref{lgps for M}, we have
an exact sequence
\begin{equation} \label{exact of L centers}
1 \longrightarrow  \widehat{\tM/M} \longrightarrow Z({\widehat{\tM}}) \longrightarrow Z({\widehat{M}}) \longrightarrow 1.
\end{equation}
We thus have the following commutative diagram of $L$-groups which is analogue of \eqref{important comm diag for G} (cf.,  and \cite[Remark 2.4]{chgo12})
\begin{equation} \label{important comm diag}
\begin{CD}
@.  1 @. 1 @.  @.
\\
@.      @VVV        @VVV   @. @.\\
1 @>>>  \widehat{\tM/M}@>{\s}>> \ker @>>> 1 @. @. 
\\
@.      @VVV        @VVV   @VVV  @.\\
1 @>>> Z(\widehat{\tM}) @>>> \widehat{\tM} @>>> (\widehat{\tM})_{ad} @>>> 1 
\\
@.      @VVV          @VVV   @|  @.
\\
1 @>>> Z(\widehat{M})  @>>> \widehat{M} @>>> (\widehat{M})_{ad}@>>> 1 \\
@.      @VVV          @VVV   @VVV  @.\\
@.  1 @. 1 @. 1 @. 
\end{CD}
\end{equation}
\end{rem}
Given $\phi \in \Phi(M),$ by Remark \ref{rem Labesse situation} and \cite[Th\'{e}or\`{e}m 8.1]{la85}, there is a lifting $\tphi \in \Phi(\tM)$ such that
\begin{equation} \label{projection}
\phi = pr \circ \tphi,
\end{equation}
where $pr$ is the projection  $\widehat{\tM} \twoheadrightarrow \widehat{M},$ cf., the middle vertical exact sequence in \eqref{important comm diag}.
Since the homomorphism $pr$ is compatible with $\Gamma$-actions on $\widehat{\tM}$ and $\widehat{M},$ the lifting $\tphi \in \Phi_{\disc}(\tM)$ is chosen uniquely up to a $1$-cocycle of $W_F$ in $\widehat{(\tM/M)}$ due to \cite[Section 7]{la85}, and if $\phi \in \Phi_{\disc}(M),$ the lifting $\tphi$ lies in $\Phi_{\disc}(\tM).$

From \cite[Chapter 3.4]{mok13} we set a maximal torus $T_{\tphi}$ in $C_{\tphi}(\widehat{\tG})^{\circ}$ to be the identity component
\[
\bA_{\widehat \tM} = (Z(\widehat{\tM})^{\Gamma})^{\circ}
\]
of the $\Gamma$--invariants of the center $Z(\widehat{\tM}).$ 
So, we have $\widehat \tM = Z_{\widehat \tG}(T_{\tphi})$ (cf., \cite[Lemma 3.5 and Propositions 3.6 \& 8.6]{bo79}).  
Likewise, we set $T_{\phi}= A_{\widehat M} \subset C_{\phi}(\widehat{G})^{\circ},$ and  $\widehat M = Z_{\widehat G}(T_{\phi}).$ We now write
\begin{eqnarray} \label{quotients}
\bar C_{\tphi}^\circ &:=& C_{\tphi}(\widehat{\tG})^\circ/Z(\widehat \tG)^{\Gamma}, \\
\nonumber 
\bar T_{\tphi} &:=& T_{\tphi}/Z(\widehat \tG)^{\Gamma}, \\
\nonumber 
\bar C_{\phi}^\circ &:=& C_{\phi}(\widehat{G})^\circ/Z(\widehat G)^{\Gamma}, \\
\nonumber 
\bar T_{\phi} &:=& T_{\phi}/Z(\widehat G)^{\Gamma}.
\end{eqnarray} 
Note that 
\begin{eqnarray} 
\nonumber 
C_{\tphi}(\widehat{\tG})^\circ/Z(\widehat \tG)^{\Gamma} &\s& (C_{\tphi}(\widehat{\tG})^\circ \cdot Z(\widehat \tG))/Z(\widehat \tG) \subset (\widehat{\tG})_{\ad}, \\
\nonumber 
T_{\tphi}/Z(\widehat \tG)^{\Gamma} &\s& (T_{\tphi} \cdot Z(\widehat \tG)) / Z(\widehat \tG) \subset (\widehat{\tG})_{\ad}, \\
\nonumber 
C_{\phi}(\widehat{G})^\circ/Z(\widehat G)^{\Gamma} &\s& (C_{\phi}(\widehat{G})^\circ \cdot Z(\widehat G))/Z(\widehat G) \subset (\widehat{G})_{\ad}, \\
\nonumber 
T_{\phi}/Z(\widehat G)^{\Gamma} &\s& (T_{\phi} \cdot Z(\widehat G))/Z(\widehat G) \subset (\widehat{G})_{\ad}.
\end{eqnarray} 

Let $\bG_{\scn}$ be the simply-connected cover of $\bG_{\der}.$
Consider $\phi_{\scn} \in \Phi(G_{\scn})$
\begin{equation} \label{projection b}
\phi_{\scn} = pr_{\scn} \circ \phi,
\end{equation}
where $pr_{\scn}$ is the projection  $\widehat{G} \twoheadrightarrow \widehat{G_{\scn}},$ cf., \eqref{projection}. Note that 
\[
\widehat{G_{\scn}}=(\widehat{G})_{\ad}=(\widehat{\tG})_{\ad}.
\]
We then have 
\[
\bar C_{\tphi}^\circ \subset C_{\phi_{\scn}}(\widehat{G_{\scn}})^\circ \subset \widehat{G_{\scn}} ~~\text{ and }~~
\bar C_{\phi}^\circ \subset C_{\phi_{\scn}}(\widehat{G_{\scn}})^\circ \subset \widehat{G_{\scn}}.
\]
Further, we have 
 \[
 \bar T_{\tphi} \subset (Z(\widehat{\tM}/Z(\widehat{\tG})))^\circ \subset \widehat{G_{\scn}}
 ~~\text{ and }~~
  \bar T_{\phi} \subset (Z(\widehat{M}/Z(\widehat{G})))^\circ \subset \widehat{G_{\scn}}.
 \]

\begin{pro} \label{lm for identity comp}
With above notation, we have
\[
\bar C_{\tphi}^\circ =  C_{\phi_{\scn}}(\widehat{G_{\scn}})^\circ = \bar C_{\phi}^\circ
\]
\end{pro}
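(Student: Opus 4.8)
The plan is to pass everything to the common adjoint group $\widehat{G_{\scn}} = (\widehat{G})_{\ad} = (\widehat{\tG})_{\ad}$ and show that all three identity components of centralizers coincide there. First I would fix the isomorphisms from \eqref{quotients}, so that $\bar C_{\tphi}^\circ$ is the image of $C_{\tphi}(\widehat{\tG})^\circ \cdot Z(\widehat{\tG})$ in $(\widehat{\tG})_{\ad}$, and similarly $\bar C_{\phi}^\circ$ is the image of $C_{\phi}(\widehat{G})^\circ \cdot Z(\widehat{G})$ in $(\widehat{G})_{\ad}$; by the commutative diagram \eqref{important comm diag for G} these two adjoint groups are literally the same group, and the surjection $\widehat{\tG} \twoheadrightarrow \widehat{G}$ (with kernel the central torus $\widehat{\tG/G}$) induces the identity map on adjoint groups. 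So it suffices to compare three subgroups of $\widehat{G_{\scn}}$.

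The key step is the inclusion $C_{\phi_{\scn}}(\widehat{G_{\scn}})^\circ \subseteq \bar C_{\tphi}^\circ$ (and the analogue for $\phi$), since the reverse inclusions are already recorded in the excerpt. For this I would argue as follows: let $g \in \widehat{\tG}$ project to an element $\bar g \in \widehat{G_{\scn}}$ lying in $C_{\phi_{\scn}}(\widehat{G_{\scn}})^\circ$. Then $\bar g$ commutes with the image of $\tphi$ in $(\widehat{\tG})_{\ad}$, so $g \tphi(w) g^{-1} \tphi(w)^{-1} \in Z(\widehat{\tG})$ for every $w \in W_F \times SL_2(\mathbb{C})$. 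This defines a cocycle $z_g \colon W_F \times SL_2(\mathbb{C}) \to Z(\widehat{\tG})$; restricting to $SL_2(\mathbb{C})$ forces triviality there since $Z(\widehat{\tG})$ is abelian (hence a torus) and $SL_2(\mathbb{C})$ is its own derived group, so $z_g$ is a $1$-cocycle of $W_F$ in $Z(\widehat{\tG})$, i.e.\ a class in $H^1(W_F, Z(\widehat{\tG}))$. The point is that on passing to the identity component $C_{\phi_{\scn}}(\widehat{G_{\scn}})^\circ$ this class is trivial (it is a continuous map from a connected group into a discrete-ish cohomology, and it vanishes at the identity), so after multiplying $g$ by a suitable element of $Z(\widehat{\tG})$ we may assume $g$ genuinely centralizes the image of $\tphi$, i.e.\ $g \in C_{\tphi}(\widehat{\tG})$; taking $g$ in the preimage of the connected group $C_{\phi_{\scn}}(\widehat{G_{\scn}})^\circ$ and using that the fibers are the connected torus $Z(\widehat{\tG})$, we get $g \in C_{\tphi}(\widehat{\tG})^\circ \cdot Z(\widehat{\tG})$, whence $\bar g \in \bar C_{\tphi}^\circ$. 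The same argument with $\widehat{G}$, $\phi$, $Z(\widehat{G})$ in place of the tilded objects gives $C_{\phi_{\scn}}(\widehat{G_{\scn}})^\circ \subseteq \bar C_{\phi}^\circ$, and combining with the inclusions already in the text finishes the proof.

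I expect the main obstacle to be the bookkeeping around connectedness: the naive statement "$g$ commutes with $\phi$ modulo the center, hence modulo the center $g$ commutes with $\phi$" is only true up to the cohomological obstruction above, and one must be careful that the obstruction really does die on the identity component and that the preimage of a connected subgroup under $\widehat{\tG} \twoheadrightarrow \widehat{G_{\scn}}$, which has connected (torus) kernel, interacts correctly with $\cdot Z(\widehat{\tG})$. A clean way to handle this is to work with the exact sequence $1 \to Z(\widehat{\tG}) \to \widehat{\tG} \to (\widehat{\tG})_{\ad} \to 1$ and its induced map on centralizers, noting that $C_{\tphi_{\ad}}((\widehat{\tG})_{\ad})^\circ$ is exactly the image of $C_{\tphi}(\widehat{\tG})^\circ$ because the central torus contributes only to $\pi_0$; then identify $\tphi_{\ad}$ with $\phi_{\scn}$ via $(\widehat{\tG})_{\ad} = \widehat{G_{\scn}}$ and with $\phi_{\ad}$ via the equality of adjoint groups in \eqref{important comm diag for G}. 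The torus structure of $Z(\widehat{\tG})$ and $Z(\widehat{G})$ (hence connectedness of all the relevant kernels) is what makes the three identity components agree on the nose rather than merely up to finite index.
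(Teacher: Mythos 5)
Your overall route---lift $\bar g\in C_{\phi_{\scn}}(\widehat{G_{\scn}})^\circ$ to $g\in\widehat{\tG}$, observe that $z_g(w)=g\,\tphi(w)\,g^{-1}\tphi(w)^{-1}$ is a $1$-cocycle of $W_F$ valued in $Z(\widehat{\tG})$, and argue that its class must die on the identity component---is an unpacking by hand of the exact sequence
\[
1 \longrightarrow C_{\tphi}(\widehat{\tG})/Z(\widehat{\tG})^\Gamma \longrightarrow C_{\phi_{\scn}}(\widehat{G_{\scn}}) \longrightarrow X^{\tG}(\tphi) \longrightarrow 1
\]
that the paper simply cites and then exploits through the finiteness of $X^{\tG}(\tphi)$. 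So the mechanism is the right one; the problem is that the single point where your argument must do real work is exactly the point you pass over.

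The gap is the assertion that $g\mapsto[z_g]$ lands in a ``discrete-ish cohomology'' and therefore vanishes on a connected group. That is false as stated: $H^1(W_F,Z(\widehat{\tG}))$ is not discrete. Already for $Z(\widehat{\tG})=\mathbb{C}^\times$ with trivial action it is $\Hom(F^\times,\mathbb{C}^\times)$, which contains the connected one-parameter family of unramified characters, and a connected group admits many nontrivial continuous homomorphisms into it; ``continuous, vanishes at the identity, target discrete'' is not an available argument here. What actually saves the step is that $g\tphi g^{-1}=z_g\cdot\tphi$ forces $[z_g]$ to lie in the group of self-twists $\{a : a\tphi\simeq\tphi\}$, and the finiteness of that group of self-twists---this is precisely the content of the finiteness of $X^{\tG}(\tphi)$ extracted from the cited lemmas, and it is where the discreteness of the parameter enters---is what makes the kernel $Z(\widehat{\tG})\cdot C_{\tphi}(\widehat{\tG})$ of the homomorphism $g\mapsto[z_g]$ a closed subgroup of finite index, hence open, in the full preimage of $C_{\phi_{\scn}}(\widehat{G_{\scn}})$, so that it contains the identity component. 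Without supplying that finiteness your argument does not close. Two smaller points: ``abelian, hence a torus'' is a non sequitur ($Z(\widehat{\tG})$ is diagonalizable but need not be connected); this is harmless only because what you actually need is that a morphism of algebraic groups carries an identity component onto the identity component of its image, not connectedness of the fibers. And your final passage from $g\in C_{\tphi}(\widehat{\tG})\cdot Z(\widehat{\tG})$ to $g\in C_{\tphi}(\widehat{\tG})^\circ\cdot Z(\widehat{\tG})$ is the analogue of the paper's equality $\bar C_{\tphi}^\circ=\bigl(C_{\tphi}(\widehat{\tG})/Z(\widehat{\tG})^\Gamma\bigr)^\circ$ and deserves the same one-line justification (connectedness of $\bar C_{\tphi}^\circ$ plus finiteness of the index) rather than an appeal to the fibers.
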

\begin{proof}
It suffices to show that $\bar C_{\tphi}^\circ =  C_{\phi_{\scn}}(\widehat{G_{\scn}})^\circ,$ as the approach is applied to $\phi$ verbatim.
We first consider the following commutative diagram
\[
\begin{CD}
@.   @.1  @.  1 @.
\\
@.      @.        @VVV   @VVV  @.\\
1 @>>> Z(\widehat{\tG})^\Gamma  @>>> C_{\tphi}(\widehat{\tG})^\circ @>>> \bar C_{\tphi}^\circ := C_{\tphi}(\widehat{\tG})^\circ/ Z(\widehat{\tG})^\Gamma @>>> 1 
\\
@.      @|          @VVV   @VVV  @.
\\
1 @>>> Z(\widehat{\tG})^\Gamma @>>> C_{\tphi}(\widehat{\tG}) @>>> C_{\tphi}(\widehat{\tG})/Z(\widehat{\tG})^\Gamma @>>> 1 \\
@.      @.          @VVV   @VVV  @.\\
@.   @. \pi_0(C_{\tphi}(\widehat{\tG})) @= \pi_0(C_{\tphi}(\widehat{\tG})) @>>> 1 .
\end{CD}
\]
The right vertical exact sequence above implies that
\begin{eqnarray} \label{cphibar}
\bar C_{\tphi}^\circ &\subset& (C_{\tphi}(\widehat{\tG})/Z(\widehat{\tG})^\Gamma)^\circ \\
\nonumber
&\subset& C_{\tphi}(\widehat{\tG})/Z(\widehat{\tG})^\Gamma.
\end{eqnarray}
Note that the index 
\[
[C_{\tphi}(\widehat{\tG})/Z(\widehat{\tG})^\Gamma : \bar C_{\tphi}^\circ]
\]
is finite and so is 
\[
[(C_{\tphi}(\widehat{\tG})/Z(\widehat{\tG})^\Gamma)^\circ : \bar C_{\tphi}^\circ].
\]
Note that the quotient $\bar C_{\tphi}^\circ$ is connected, due to the isomorphism 
\[ C_{\tphi}(\widehat{\tG})^\circ/ Z(\widehat{\tG})^\Gamma \s (C_{\tphi}(\widehat{\tG})^\circ \cdot Z(\widehat{\tG}))/ Z(\widehat{\tG}).
\]  
From \eqref{cphibar}, we then have 
\begin{equation} \label{an eqaulity in iendtity comp}
\bar C_{\tphi}^\circ = (C_{\tphi}(\widehat{\tG})/Z(\widehat{\tG})^\Gamma)^\circ.
\end{equation}
We consider 
\begin{equation} \label{def of X}
X^{\tG}(\tphi) := \{ \bold a \in H^1(W_F,\widehat{\tG/G}) : \bold a \tphi \s \tphi  \},
\end{equation}
which lies in the following exact sequence (cf., \cite[Lemma 5.3.4]{chaoli} and  \cite[Lemma 4.9]{choiymulti})
\[
1 \longrightarrow C_{\tphi}(\widehat{\tG})/Z(\widehat{\tG})^\Gamma \longrightarrow C_{\phi_{\scn}}(\widehat{G_{\scn}}) \longrightarrow X^{\tG}(\tphi) \longrightarrow 1.
\]
This is due to the fact that $C_{\tphi}(\widehat{\tG})/Z(\widehat{\tG})^\Gamma \subset C_{\phi_{\scn}}(\widehat{G_{\scn}}).$
Since $X^{\tG}(\tphi)$ is finite, we have
\[
(C_{\tphi}(\widehat{\tG})/Z(\widehat{\tG})^\Gamma)^\circ = C_{\phi_{\scn}}(\widehat{G_{\scn}})^\circ.
\]
Therefore, from \eqref{an eqaulity in iendtity comp}, the proof is complete.
\end{proof}
\begin{pro} \label{a key prop}
With the above notation,
we have
\begin{equation} \label{last arg for Arthur conj}
W^{\circ}_{\tphi} = W^{\circ}_{\phi}.
\end{equation}
\end{pro}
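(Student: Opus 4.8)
The plan is to deduce the equality of Weyl groups from the equality of identity components of centralizers established in Proposition \ref{lm for identity comp}, after first reconciling the two different choices of maximal torus. Recall $W_\phi^\circ = N_{C_\phi(\widehat G)^\circ}(T_\phi)/Z_{C_\phi(\widehat G)^\circ}(T_\phi)$ with $T_\phi = A_{\widehat M}$, and similarly $W_\tphi^\circ = N_{C_\tphi(\widehat\tG)^\circ}(T_\tphi)/Z_{C_\tphi(\widehat\tG)^\circ}(T_\tphi)$ with $T_\tphi = A_{\widehat\tM}$. The Weyl group of a connected reductive group relative to a maximal torus is independent of the chosen maximal torus (all are conjugate), so it suffices to compute both sides inside the common group $C_{\phi_\scn}(\widehat{G_\scn})^\circ = \bar C_\tphi^\circ = \bar C_\phi^\circ$ of Proposition \ref{lm for identity comp}.

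First I would observe that passing from $C_\tphi(\widehat\tG)^\circ$ to $\bar C_\tphi^\circ = C_\tphi(\widehat\tG)^\circ/Z(\widehat\tG)^\Gamma$ does not change the Weyl group: $Z(\widehat\tG)^\Gamma$ is central in $C_\tphi(\widehat\tG)^\circ$ and contained in $T_\tphi$ (since $T_\tphi = (Z(\widehat\tM)^\Gamma)^\circ \supseteq (Z(\widehat\tG)^\Gamma)^\circ$, and the component group contributes nothing to normalizers/centralizers modulo the identity component), so the image $\bar T_\tphi$ of $T_\tphi$ is a maximal torus of $\bar C_\tphi^\circ$ and $N_{C_\tphi(\widehat\tG)^\circ}(T_\tphi)/Z_{C_\tphi(\widehat\tG)^\circ}(T_\tphi) \simeq N_{\bar C_\tphi^\circ}(\bar T_\tphi)/Z_{\bar C_\tphi^\circ}(\bar T_\tphi)$. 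The same argument applies on the $G$ side, giving $W_\phi^\circ \simeq N_{\bar C_\phi^\circ}(\bar T_\phi)/Z_{\bar C_\phi^\circ}(\bar T_\phi)$. Now $\bar C_\tphi^\circ$ and $\bar C_\phi^\circ$ are literally the same connected reductive subgroup $C_{\phi_\scn}(\widehat{G_\scn})^\circ$ of $(\widehat\tG)_{\ad} = (\widehat G)_{\ad} = \widehat{G_\scn}$, and $\bar T_\tphi$, $\bar T_\phi$ are both maximal tori of it; hence the two Weyl groups are isomorphic, and in fact equal as subgroups of the ambient Weyl group $W_M = W_{\tM}$ once one tracks that both arise as the Weyl group of the \emph{same} group $C_{\phi_\scn}(\widehat{G_\scn})^\circ$.

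To make the identification canonical rather than merely an abstract isomorphism, I would use that $\widehat\tM = Z_{\widehat\tG}(T_\tphi)$ and $\widehat M = Z_{\widehat G}(T_\phi)$, together with Remark \ref{twoquotients} and the commutative diagram \eqref{important comm diag}, to check that the images $\bar T_\tphi \subset (Z(\widehat\tM)/Z(\widehat\tG))^\circ$ and $\bar T_\phi \subset (Z(\widehat M)/Z(\widehat G))^\circ$ coincide as maximal tori of $C_{\phi_\scn}(\widehat{G_\scn})^\circ$ — indeed $\widehat\tM/Z(\widehat\tG) = \widehat M/Z(\widehat G)$ is the common Levi $\widehat{M_\scn}$ of $\widehat{G_\scn}$ and both $\bar T_\tphi$, $\bar T_\phi$ are the connected center of this common Levi, by the parallel constructions of $T_\tphi$ and $T_\phi$ as $(Z(\widehat\tM)^\Gamma)^\circ$ and $(Z(\widehat M)^\Gamma)^\circ$ and the compatibility of the $\Gamma$-actions with the projection. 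With a single group and a single maximal torus, $W_\tphi^\circ$ and $W_\phi^\circ$ are by definition the same quotient, proving \eqref{last arg for Arthur conj}.

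The main obstacle I anticipate is the bookkeeping needed to verify that $\bar T_\tphi = \bar T_\phi$ as subgroups of $C_{\phi_\scn}(\widehat{G_\scn})^\circ$, i.e.\ that the two a priori different maximal tori really do coincide after projection — this requires carefully chasing the $\Gamma$-equivariance of $pr\colon \widehat\tM \twoheadrightarrow \widehat M$ and the identification $\widehat\tM/Z(\widehat\tG) \simeq \widehat M/Z(\widehat G)$ through diagram \eqref{important comm diag}, and checking that taking $\Gamma$-fixed points commutes appropriately with taking identity components in this situation (using that $\widehat{\tG/G}$ is a torus, so the relevant $H^1$ obstructions are controlled). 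Once that coincidence is in hand the conclusion is immediate from Proposition \ref{lm for identity comp}; if instead one only gets an abstract isomorphism, a short extra argument comparing both with the inclusion into $W_M$ closes the gap.
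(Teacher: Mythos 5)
Your proposal is correct and follows essentially the same route as the paper: the paper likewise writes $W^{\circ}_{\tphi}$ and $W^{\circ}_{\phi}$ as Weyl groups of $\bar C_{\tphi}^\circ$ and $\bar C_{\phi}^\circ$ relative to $\bar T_{\tphi}$ and $\bar T_{\phi}$, invokes Proposition \ref{lm for identity comp} to identify the ambient groups, and reduces everything to the equality $\bar T_{\tphi} = \bar T_{\phi}$, which it establishes by the same diagram chase through \eqref{important comm diag} using $\tbG/\bG \simeq \tbM/\bM$. The bookkeeping you flag as the main obstacle is exactly the content of the paper's commutative diagram, so no further gap remains.
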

\begin{proof}
Since we have, from an equality in \cite[p.64]{mok13},
\[
W_{\tphi}^\circ = N_{\bar C_{\tphi}^\circ}(\bar T_{\tphi})/\bar T_{\tphi} ~~\text{ and } ~~W_{\phi}^\circ = N_{\bar C_{\phi}^\circ}(\bar T_{\phi})/\bar T_{\phi} 
\]
it suffices to show that 
\[
\bar T_{\tphi} = \bar T_{\phi},
\]
due to Proposition \ref{lm for identity comp}.
Using the fact that $\tbG/\bG \s \tbM/\bM$ as $F$-tori (see Remark \ref{twoquotients}), and due to \eqref{quotients}, we have the following commutative diagram
\begin{equation} \label{important comm diag}
\begin{CD}
@.  1 @. 1 @.  1 @. 1
\\
@.      @VVV        @VVV   @VVV @VVV \\
1 @>>>  (\widehat{\tG/G})^\Gamma @>>> Z(\widehat{\tG})^\Gamma @>>> Z(\widehat{G})^\Gamma  @>>> H^1(F,  \widehat{\tG/G}) @. 
\\
@.      @|        @VVV   @VVV  @| \\
1 @>>>  ((\widehat{\tM/M})^\Gamma)^\circ @>>> (Z(\widehat{\tM})^\Gamma)^\circ @>>> (Z(\widehat{M})^\Gamma)^\circ @>>>  H^1(F,  \widehat{\tM/M})^\circ 
\\
@.      @VVV          @VVV   @VVV  @VVV
\\
1 @>>> 1 @>>>   \bar T_{\tphi} @>>> \bar T_{\phi}  @>>> 1 \\
@.      @VVV          @VVV   @VVV  @.\\
@.  1 @. 1 @. 1 @. 
\end{CD}
\end{equation}
Therefore, we $\bar T_{\tphi} = \bar T_{\phi}$ and the proof is complete.
\end{proof}

\begin{rem}
For the case of $(\tbG, \bG)=(\GSO_n, \SO_n)$ and $(\GSp_{2n}, \Sp_{2n}),$  Proposition \ref{a key prop} was discussed in \cite[Section 4.2]{xumathann}.
\end{rem}

\subsection{The working hypothesis} \label{sec two hypoth}

We shall assume the following hypothesis to prove our main theorems: Theorem \ref{thm from tG} applying $\bJ=\tbG,$ and Theorem \ref{thm from G} applying $\bJ=\bG.$

\begin{hyp} \label{workhyp}
Let $\boldsymbol{J}$ be a connected reductive group over $F,$ and let $\bM_{\bJ}$ be an $F$-Levi subgroup of $\boldsymbol{J}.$
Under the validity of \eqref{pre temp llc} for tempered representations of $J$ and for discrete series representations of $M_J,$ given $\phi_J \in \Phi_{\disc}(M_J)$ and $\sigma_J \in \Pi_{\phi}(M_J),$ we have
\begin{equation} \label{hyp for W0}
W_{\sigma_J}^\circ = W_{\sigma_J, \phi_J} ^\circ. 
\end{equation}
Here, $W_{\phi_J}$ is considered as a subgroup of $W_{M_J}$ (see Section \ref{rgp arthur conj}, \cite[p.45]{art89ast}).
\end{hyp}

\begin{rem} \label{question for gspin W0}
The equality \eqref{hyp for W0} is known for those cases where the Arthur's conjecture is proved (cf., \cite{art12, bg12, bggspin,  baj05,  chaoli, cgsu, chgo12, cgclassic, mok13}).
\end{rem}

\subsection{Proof of Arthur's conjectures under Working Hypothesis \ref{workhyp} for $\bJ=\tbG$} \label{sec arthur conj from tG}

In this section, we assume Working Hypothesis \ref{workhyp} for $\bJ=\tbG.$
Then, under the validity of \eqref{pre temp llc} for tempered representations of $J=\tG$ and for discrete series representations of $M_J=\tM,$  we construct tempered $L$-packets of $G,$ restricting representations from $\tG$ to $G$ and from $\tM$ to $M.$

As in Section \ref{a key proposition}, given $\phi \in \Phi(G),$ there is a lift $\tphi \in \Phi_{\temp}(\tG)$ such that
\[
\phi = pr \circ \tphi,
\]
where $pr$ is the projection  $\widehat{\tG} \twoheadrightarrow \widehat{G}.$ 
We note that the $L$-parameter $\phi$ lies in $\Phi_{\temp}(G),$ as $\tphi \in \Phi_{\temp}(\tG).$

Using the $L$-packet $\Pi_{\tphi}(\tG)$ for $\tphi \in \Phi_{\temp}(\tG)$ in Working Hypothesis \ref{workhyp} for $\bJ=\tbG,$ we construct an $L$-packet  $\Pi_{\phi}(G)$ for $\phi \in \Phi_{\temp}(G)$ as the set of isomorphism classes of irreducible constituents in the restriction from $\tG$ to $G:$
\[
\Pi_{\phi}(G) := \{ \sigma \hookrightarrow {\Res}^{\tG}_{G}(\ts),~~ \ts \in \Pi_{\tphi}(\tG) \} / \s.
\]
Let $\bM$ be an $F$-Levi subgroup of $\bG.$
Likewise, given $\tphi \in \Phi_{\disc}(\tM)$ and its projection $\phi \in \Phi_{\disc}(M),$
from Working Hypothesis \ref{workhyp}, we construct an $L$-packet $\Pi_{\phi}(M)$ for $\phi \in \Phi_{\disc}(M):$
\[
\Pi_{\phi}(M) := \{ \sigma \hookrightarrow {\Res}^{\tM}_{M}(\ts),~~ \ts \in \Pi_{\tphi}(\tM) \} / \s.
\]
We note that the $L$-parameter $\phi$ lies in $\Phi_{\disc}(M),$ as $\tphi \in \Phi_{\disc}(\tM).$

In what follows, we prove Arthur's conjectures for both $\tG$ and $G$ under Working Hypotheses \ref{workhyp} for $J=\tG.$ 
We begin with Lemmas.
\begin{lm}  \label{a inclusion on W}
Assume that Working Hypotheses \ref{workhyp} is valid for $\bJ=\tbG.$ We identify $W_\phi, W_{\tphi}$ with a subgroup of $W_M, W_{\tM}.$ 
With the above notation, for $\ts \in \Pi_{\tphi}(\tM)$ and $\sigma \in \Pi_{\phi}(M),$ 
we have 
\begin{equation} \label{for ts}
W(\ts) \subset W_{\tphi},
\end{equation}
and 
\begin{equation} \label{for s}
W(\sigma) \subset W_\phi
\end{equation}
\end{lm}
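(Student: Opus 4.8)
The plan is to prove the two inclusions in parallel, since the argument for $W(\ts) \subset W_{\tphi}$ applies verbatim to $W(\sigma) \subset W_\phi$ after replacing $\tG, \tM, \tphi, \ts$ by $G, M, \phi, \sigma$. So I will focus on \eqref{for ts}. Recall that under Working Hypothesis \ref{workhyp} for $\bJ = \tbG$ we have the equality $W_{\ts}^\circ = W_{\tphi, \ts}^\circ = W_{\tphi}^\circ \cap W(\ts)$, and recall the Knapp--Stein decomposition $W(\ts) = R_{\ts} \ltimes W_{\ts}^\circ$. Thus it suffices to show that each element of $R_{\ts}$, viewed in $W_{\tM}$, lies in $W_{\tphi}$; since $W_{\tphi}^\circ \subset W_{\tphi}$ already, the full stabilizer $W(\ts)$ will then be contained in $W_{\tphi}$.

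First I would fix $w \in W(\ts)$ and recall that, by the definition of $W_{\tphi}$ as $N_{C_{\tphi}(\widehat{\tG})}(T_{\tphi})/Z_{C_{\tphi}(\widehat{\tG})}(T_{\tphi})$ and the identification $\widehat{\tM} = Z_{\widehat{\tG}}(T_{\tphi})$ with $T_{\tphi} = (Z(\widehat{\tM})^\Gamma)^\circ$, membership of $w$ in $W_{\tphi}$ is equivalent to the assertion that $w$ (acting on $\widehat{\tM}$ through $W_{\tM} = N_{\widehat{\tG}}(\widehat{\tM})/\widehat{\tM}$, extended by its $\Gamma$-action) can be realized by an element of $\widehat{\tG}$ commuting with the image of $\tphi$. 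The key input is the standard link between the Knapp--Stein $R$-group and Plancherel measures: for $w \in W(\ts)$, the condition $w \in R_{\ts}$ forces $w$ to permute the set $\Delta'_{\ts}$ of roots with vanishing $\mu_\alpha$, and on the dual side these are precisely the roots along which $\tphi$ extends to a larger parabolic, i.e. the reflections generating $W_{\tphi}^\circ$ correspond to the roots of $\Delta'_{\ts}$. This is where I would invoke the known compatibility (from the cases cited in Remark \ref{question for gspin W0}, or more robustly from Arthur's construction in \cite[\S 2]{art89ast} and \cite{mok13}) that the reflection roots for $W_{\tphi}^\circ$ inside $C_{\tphi}(\widehat{\tG})^\circ$ match $\Delta'_{\ts}$, and that the normalizer $W_{\tphi}$ is exactly the stabilizer in $W_{\tM}$ of the pair $(\tphi, \Delta'_{\ts})$ up to $W_{\tphi}^\circ$. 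Combining: $w \in W(\ts)$ stabilizes $\ts$, hence stabilizes $\mu_\alpha(\ts)$ as a function of $\alpha$, hence stabilizes $\Delta'_{\ts}$; together with $w$ stabilizing $\tphi$ up to equivalence (which follows since $\ts \in \Pi_{\tphi}(\tM)$ and the $L$-packet is $W$-equivariant, so $^w\ts \in \Pi_{\,^w\tphi}(\tM)$ forces $^w\tphi \simeq \tphi$), we get $w \in W_{\tphi}$.

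The main obstacle I anticipate is making rigorous the claim that $^w\tphi \simeq \tphi$ for $w \in W(\ts)$: this requires knowing that the map $\ts \mapsto \tphi$ is compatible with the $W_{\tM}$-action, which is part of the expected functorial properties of the local Langlands correspondence rather than something purely formal. Under Working Hypothesis \ref{workhyp} (and the validity of \eqref{pre temp llc}) one has the packet $\Pi_{\tphi}(\tM)$, and one needs the normalization of the correspondence to intertwine twisting by $w \in W_{\tM}$ on representations with the geometric $w$-action on parameters; I would cite this as a standard compatibility built into \eqref{pre temp llc} (as used throughout \cite{art12, mok13, cgsu}). Once that is granted, the rest is the Plancherel-measure bookkeeping above, which is unconditional. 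The same argument, run with $(G, M, \phi, \sigma)$ in place of $(\tG, \tM, \tphi, \ts)$ and using Working Hypothesis \ref{workhyp} transported to $\bJ = \tbG$ via the restriction construction of $\Pi_\phi(M)$, yields \eqref{for s}.
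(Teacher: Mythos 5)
The essential kernel of your argument for \eqref{for ts} --- that $^w\ts \simeq \ts$ together with the $W_{\tM}$-equivariance of the correspondence and the disjointness of $L$-packets forces $^w\tphi \simeq \tphi$, whence $w \in W_{\tphi}$ by the identification of $W_{\tphi}$ with the stabilizer of $\tphi$ in $W_{\tM}$ --- is exactly the paper's argument, and it is complete on its own. The surrounding machinery you add (splitting $W(\ts) = R_{\ts} \ltimes W^\circ_{\ts}$, matching the reflection roots of $W^\circ_{\tphi}$ with $\Delta'_{\ts}$, characterizing $W_{\tphi}$ as the stabilizer of the pair $(\tphi,\Delta'_{\ts})$) is not needed here and imports assertions that are at least as strong as the lemma itself; in particular the claimed identification of $\Delta'_{\ts}$ with the roots of $C_{\tphi}(\widehat{\tG})^\circ$ is part of what the later theorems are trying to establish, so leaning on it here risks circularity.

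The genuine gap is your claim that \eqref{for s} follows ``verbatim.'' It does not, because Working Hypothesis \ref{workhyp} is assumed only for $\bJ=\tbG$: there is no local Langlands correspondence for $M$ given a priori, only the packet $\Pi_{\phi}(M)$ manufactured by restriction from $\tM$. Consequently, for $w \in W_M$ with $^w\sigma \simeq \sigma$, you cannot directly invoke disjointness of packets of $M$. The paper's route is to lift $\sigma$ to some $\ts$ with $\sigma \hookrightarrow \Res^{\tM}_{M}(\ts)$ and observe (via Gelbart--Knapp and Tadi\'{c}) that $^w\sigma \simeq \sigma$ only forces $^w\ts \simeq \ts\,\chi^\sharp$ for some character $\chi^\sharp$ of $\tM/M$; disjointness upstairs then gives $^w\tphi \simeq \tphi\,\chi^\sharp$, and it is only after composing with the projection $pr:\widehat{\tM}\twoheadrightarrow\widehat{M}$ --- which kills the cocycle attached to $\chi^\sharp$ since $\widehat{\tM/M}=\ker(pr)$ --- that one obtains $^w\phi \simeq \phi$. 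This character twist is precisely the asymmetry between the two inclusions, and your proposal never confronts it: the phrase ``transported to $\bJ=\tbG$ via the restriction construction'' is where the lifting-and-twist bookkeeping has to happen, and it is omitted.
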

\begin{proof}
For \eqref{for ts}, we let $w \in W_{\tM}$ be given such that $^w\ts \s \ts.$ Note that we have 
\[
\Pi_{^w\tphi}(\tG) \cap \Pi_{\tphi}(\tG) = \{ \ts \}.
\]  
Due to the disjointness in Working Hypothesis \ref{workhyp} for $L$-packets of $\tG,$ we have 
\[
^w\tphi \s \tphi.
\]
Since the elements of $W_{\tphi}$ stabilize $\bA_{\widehat \tM}$ by definition (cf., \cite[p.60]{mok13}), it thus follows that $w$ lies in $W_{\tphi}.$

For \eqref{for s}, we let $w \in W_M$ be given such that $^w\sigma \s \sigma.$ 
Let $\sigma \in \Irr(\tM)$ be a lifting of $\sigma^\sharp$ such that $\sigma \hookrightarrow {\Res}^{\tM}_{M}(\ts)$ (see \cite[Lemma 2.3]{gk82} and \cite[Proposition 2.2]{tad92}).
Since $^w\sigma \s \sigma,$ it follows from \cite[Lemma 2.4]{gk82} and \cite[Corollary 2.5]{tad92} that 
\[
^w\sigma^\sharp \s \sigma^\sharp \chi^\sharp
\]
for some character $\chi^\sharp$ of $\tM/M.$
We then have 
\[
\sigma^\sharp \chi^\sharp \in \Pi_{\tphi \chi^\sharp}(\tM)~~\text{and}~~ ^w\sigma^\sharp \in \Pi_{^w\tphi}(\tM),
\]
where $\chi^\sharp$ is considered as a $1$-cocycle of $W_F$ in $\widehat{(\tM/M)}$ by the local class field theory (cf., \cite[Section 7]{la85} and \cite[Theorem 3.5.1]{chaoli}).
Combining the isomorphism $^w\sigma^\sharp \s \sigma^\sharp \chi^\sharp$ and the disjointness in Working Hypothesis \ref{workhyp} for $L$-packets of $\tG,$ we have   
\[
^w\tphi  \s \tphi \chi^\sharp.
\]
Through the projection $pr: \widehat{\tM} \twoheadrightarrow \widehat{M},$
it follows that  
\[
^w\phi \s \phi.
\]
Since the elements of $W_\phi$ stabilize $\bA_{\widehat M}$ by definition (cf., \cite[p.60]{mok13}), we have $w \in W_\phi.$ Thus, the proof is complete.
\end{proof}

We then have the following inclusion.
\begin{lm} \label{lm imp1}
Assume that Working Hypotheses \ref{workhyp} is valid for $\bJ=\tbG.$ We identify $W_\phi, W_{\tphi}$ with a subgroup of $W_M, W_{\tM}.$
We have 
\begin{equation} \label{equ lm imp1}
W_{\tphi} \cap W(\sigma) \subset W(\ts) 
\end{equation}
\end{lm}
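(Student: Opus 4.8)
The plan is to take $w \in W_{\tphi} \cap W(\sigma)$ and show that the twisted representation ${}^w\ts$ is isomorphic to $\ts$, which gives $w \in W(\ts)$ by definition. The first step is to lift $\sigma$ to a representation $\sigma^\flat \in \Irr(\tM)$ with $\sigma \hookrightarrow \Res^{\tM}_M(\ts)$, exactly as in the proof of Lemma \ref{a inclusion on W}, using \cite[Lemma 2.3]{gk82} and \cite[Proposition 2.2]{tad92}. Since $w \in W(\sigma)$, i.e.\ ${}^w\sigma \s \sigma$, the results of \cite[Lemma 2.4]{gk82} and \cite[Corollary 2.5]{tad92} give a character $\chi^\sharp$ of $\tM/M$ such that ${}^w\ts \s \ts \chi^\sharp$, and correspondingly, after passing through $pr : \widehat{\tM} \twoheadrightarrow \widehat{M}$ and using the disjointness of $L$-packets in Working Hypothesis \ref{workhyp} for $\tG$, one gets ${}^w\tphi \s \tphi \chi^\sharp$ (this is precisely the intermediate conclusion reached in the proof of Lemma \ref{a inclusion on W}).

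The second step is to use the extra assumption $w \in W_{\tphi}$ to upgrade this to ${}^w\tphi \s \tphi$, hence $\chi^\sharp$ becomes trivial on the relevant piece and ${}^w\ts \s \ts$. The point is that $W_{\tphi} = N_{C_{\tphi}(\widehat{\tG})}(T_{\tphi})/Z_{C_{\tphi}(\widehat{\tG})}(T_{\tphi})$ consists of automorphisms induced by elements of $\widehat{\tG}$ that centralize the image of $\tphi$; thus if $w \in W_{\tphi}$ is represented by such an element, then ${}^w\tphi$ is $\widehat{\tG}$-conjugate to $\tphi$, i.e.\ ${}^w\tphi \s \tphi$ as an equivalence class of $L$-parameters for $\tG$. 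Combining ${}^w\tphi \s \tphi$ with ${}^w\tphi \s \tphi \chi^\sharp$ forces $\tphi \s \tphi\chi^\sharp$; since the lifting $\tphi$ of $\phi$ is unique only up to a $1$-cocycle valued in $\widehat{(\tM/M)}$ (see \cite[Section 7]{la85} and the discussion after \eqref{projection}), the equivalence $\tphi \s \tphi \chi^\sharp$ means $\chi^\sharp$ lies in the finite stabilizer group, which does not immediately make $\chi^\sharp$ trivial — but what we actually need is only that ${}^w\ts$ and $\ts$ lie in the same $L$-packet of $\tM$. From $W_{\tphi} \subset W_{\tM}$ stabilizing $\bA_{\widehat \tM}$ and $\widehat \tM = Z_{\widehat \tG}(T_{\tphi})$, the element $w$ sends the $L$-packet $\Pi_{\tphi}(\tM)$ to $\Pi_{{}^w\tphi}(\tM) = \Pi_{\tphi}(\tM)$, so ${}^w\ts \in \Pi_{\tphi}(\tM)$; then since ${}^w\ts \s \ts\chi^\sharp$ and $\ts\chi^\sharp \in \Pi_{\tphi \chi^\sharp}(\tM)$, disjointness of the packets $\Pi_{\tphi}(\tM)$ and $\Pi_{\tphi\chi^\sharp}(\tM)$ (unless $\tphi\chi^\sharp \s \tphi$) together with ${}^w\ts$ lying in $\Pi_{\tphi}(\tM)$ forces ${}^w\ts \s \ts$, hence $w \in W(\ts)$.

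The third step is just to record that this argument is independent of the choice of lift $\ts$ and $\sigma$, which is immediate since the conclusion $w \in W(\ts)$ only references $\ts$ and the hypothesis $w \in W(\sigma)$.

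I expect the main obstacle to be the bookkeeping in Step 2: cleanly deducing ${}^w\ts \s \ts$ (not merely ${}^w\ts \s \ts\chi^\sharp$) from the two facts $w \in W_{\tphi}$ and ${}^w\tphi \s \tphi\chi^\sharp$. The delicate point is that $w$ being in $W_{\tphi}$ is a statement about conjugation by an element of $\widehat{\tG}$ centralizing $\mathrm{Im}(\tphi)$, so it must be translated into a statement about the $\tM$-representation $\ts$ — one has to know that the Weyl-group action of $w$ on $\Pi_{\tphi}(\tM)$ is compatible with $\widehat{\tG}$-conjugation of the parameter, which is exactly the content of "$W_{\tphi}$ stabilizes $\bA_{\widehat\tM}$ and $\widehat\tM = Z_{\widehat\tG}(T_{\tphi})$" cited in Lemma \ref{a inclusion on W}. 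Once that compatibility is in hand, the disjointness built into Working Hypothesis \ref{workhyp} does the rest.
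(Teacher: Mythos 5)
Your proposal tracks the paper's argument closely up to a point: both lift the situation to $\tM$, both use \cite[Lemma 2.4]{gk82}/\cite[Corollary 2.5]{tad92} to write ${}^w\ts \s \ts\tchi$ for some character $\tchi$ of $\tM/M$, and both combine disjointness of packets with $w\in W_{\tphi}$ to conclude ${}^w\tphi\s\tphi$ and hence $\tchi\in X^{\tM}(\tphi)$. The divergence, and the genuine gap, is in your final step. You deduce that ${}^w\ts$ lies in $\Pi_{\tphi}(\tM)$ and then claim this ``forces ${}^w\ts\s\ts$.'' It does not: $\Pi_{\tphi}(\tM)$ need not be a singleton, and knowing that $\ts$ and ${}^w\ts\s\ts\tchi$ both lie in it says nothing about whether they are isomorphic. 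Your appeal to disjointness of $\Pi_{\tphi}(\tM)$ and $\Pi_{\tphi\tchi}(\tM)$ ``unless $\tphi\tchi\s\tphi$'' is vacuous at this stage, because you have already established $\tphi\tchi\s\tphi$, so the two packets coincide and disjointness yields no information. What is actually needed is the implication $\tchi\in X^{\tM}(\tphi)\Rightarrow\tchi\in X^{\tM}(\ts)$, i.e.\ that a character twist fixing the parameter $\tphi$ also fixes the individual member $\ts$ of its packet; this is a nontrivial statement (in general the quotient $X^{\tM}(\tphi)/X^{\tM}(\ts)$ measures how the packet breaks up under twisting, and it can be nontrivial), and your argument supplies no reason for it.

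For comparison, the paper's proof runs by contradiction: assuming $w\notin W(\ts)$ it records $\tchi\notin X^{\tM}(\ts)$ while deriving $\tchi\in X^{\tM}(\tphi)$ exactly as you do, and then closes the argument by invoking a containment between the two stabilizer groups $X^{\tM}(\ts)$ and $X^{\tM}(\tphi)$ attributed to the local Langlands correspondence for tori. (As a side remark, the inclusion there is printed as $X^{\tM}(\ts)\subset X^{\tM}(\tphi)$, whereas the direction that actually produces the contradiction is $X^{\tM}(\tphi)\subset X^{\tM}(\ts)$; that containment is the real content of the step.) In any case, the comparison of these two finite character groups is the one idea your proposal is missing, and without it the lemma is not proved.
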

\begin{proof}
Suppose that there is $w \in W_{\tphi} \cap W(\sigma)$ which does not lie in $W(\ts).$  As $w \in W(\sigma) \setminus W(\ts)$ we have 
$
\tchi \in (\tM/M)^D
$
 such that 
 \[
 {^w}\ts \s \ts \tchi \not\simeq \ts.
 \]
It then follows that
\begin{equation} \label{tchi lies 1}
\tchi \notin X^{\tM}(\ts)
\end{equation}
(refer to \eqref{def of X} for $X^{\tM}(\ts)$). 
Under the validity of \eqref{pre temp llc} for discrete series representations of $\tM$ (Working Hypothesis \ref{workhyp}), we have 
\[
{^w}\ts = \ts \tchi  \in \Pi_{{^w}\tphi} \cap \Pi_{\tphi \tchi},
\]
which yields 
\[
{^w}\tphi \s \tchi \tchi.
\]
As $w \in W_{\tphi},$ we have ${^w}\tphi \s \tphi.$ It then follows that
\begin{equation} \label{tchi lies 2}
\tchi \in X^{\tM}(\tphi).
\end{equation}
However, since the local Langlands correspondence for tori (cf. \cite[Theorem 1]{lan97olga} and \cite[Theorem, p.179]{yu09}) yields
\[
X^{\tM}(\ts) \subset X^{\tM}(\tphi),
\]
it contradicts by \eqref{tchi lies 1} and \eqref{tchi lies 2}.
We thus complete the proof.
\end{proof}

\begin{cor} \label{cor for inclusion}
Assume that Working Hypotheses \ref{workhyp} is valid for $\bJ=\tbG.$ We identify $W_\phi, W_{\tphi}$ with a subgroup of $W_M, W_{\tM}.$
We have
\begin{equation} \label{equ cor for inclusion}
W_{\phi, \sigma}^\circ \subset W(\ts).
\end{equation}
\end{cor}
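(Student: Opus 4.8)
The plan is to obtain the inclusion formally by chaining the definition of $W_{\phi,\sigma}^\circ$, Proposition~\ref{a key prop}, and Lemma~\ref{lm imp1}. First I would unwind the definition from Section~\ref{rgp arthur conj}: by construction $W_{\phi,\sigma}^\circ = W_\phi^\circ \cap W(\sigma)$, where $W_\phi^\circ$ is regarded as a subgroup of $W_M$. Next I would invoke Proposition~\ref{a key prop}, which gives $W_\phi^\circ = W_{\tphi}^\circ$; the substance there is that both identity components are computed inside $(\widehat{\tG})_{\ad}=(\widehat G)_{\ad}=\widehat{G_{\scn}}$ out of the equalities $\bar C_{\tphi}^\circ = \bar C_{\phi}^\circ$ (Proposition~\ref{lm for identity comp}) and $\bar T_{\tphi}=\bar T_{\phi}$, so that under the natural identification of $W_M$ with $W_{\tM}$ afforded by $\bG_{\der}=\tbG_{\der}$ and $\tbM = \bM \cap \bG$, the reflection subgroups $W_\phi^\circ \subset W_M$ and $W_{\tphi}^\circ \subset W_{\tM}$ are literally the same. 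Hence $W_{\phi,\sigma}^\circ = W_{\tphi}^\circ \cap W(\sigma) \subseteq W_{\tphi} \cap W(\sigma)$.

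Finally I would apply Lemma~\ref{lm imp1}, which (under Working Hypothesis~\ref{workhyp} for $\bJ=\tbG$) yields $W_{\tphi} \cap W(\sigma) \subseteq W(\ts)$. Combining the last two displays gives $W_{\phi,\sigma}^\circ \subseteq W(\ts)$, which is \eqref{equ cor for inclusion}.

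The argument is entirely formal once the two preceding statements are in hand; the only point that needs attention — and it is already settled by Proposition~\ref{a key prop} — is verifying that the passage from $\tphi$ to $\phi$ does not alter the identity component of the relative Weyl group, i.e.\ that the two descriptions of $W_\phi^\circ$ and $W_{\tphi}^\circ$ as reflection subgroups of the common Weyl group coincide. I do not anticipate any further obstacle, so this corollary is an immediate consequence of Proposition~\ref{a key prop} and Lemma~\ref{lm imp1}.
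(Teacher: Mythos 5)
Your argument is correct and is precisely the paper's proof, which states only that the corollary is ``immediate from Proposition~\ref{a key prop} and Lemma~\ref{lm imp1}''; you have simply spelled out the chain $W_{\phi,\sigma}^\circ = W_\phi^\circ \cap W(\sigma) = W_{\tphi}^\circ \cap W(\sigma) \subseteq W_{\tphi} \cap W(\sigma) \subseteq W(\ts)$ that the author leaves implicit. No gaps.
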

\begin{proof}
This is immediate from Proposition \ref{a key prop} and Lemma \ref{lm imp1}.
\end{proof}

\begin{thm} \label{thm from tG}
Assume that Working Hypotheses \ref{workhyp} is valid for $\bJ=\tbG.$ With the above notation,
for $\ts \in \Pi_{\tphi}(\tM)$ and $\sigma \in \Pi_{\phi}(M),$ 
we have
\begin{equation}  \label{arthur conj for tG}
R_{\ts} \s R_{\tphi, \ts},
\end{equation}
and
\begin{equation}  \label{arthur conj for G}
R_{\sigma} \s R_{\phi, \sigma}.
\end{equation}
\end{thm}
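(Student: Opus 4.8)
The plan is to prove the two isomorphisms in parallel, reducing each to the Working Hypothesis for $\bJ=\tbG$ together with the structural results already established. Recall that $R_\sigma \simeq W(\sigma)/W^\circ_\sigma$ and $R_{\phi,\sigma} \simeq W_{\phi,\sigma}/W^\circ_{\phi,\sigma}$, so it suffices to show $W(\sigma) = W_{\phi,\sigma}$ and $W^\circ_\sigma = W^\circ_{\phi,\sigma}$; likewise for the $\ts$ case it suffices to show $W(\ts) = W_{\tphi,\ts}$ and $W^\circ_\ts = W^\circ_{\tphi,\ts}$. The first pair of equalities is comparatively painless: by Lemma \ref{a inclusion on W} we have $W(\sigma) \subset W_\phi$, hence $W(\sigma) = W_\phi \cap W(\sigma) = W_{\phi,\sigma}$; symmetrically, \eqref{for ts} gives $W(\ts) = W_{\tphi}\cap W(\ts) = W_{\tphi,\ts}$. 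So the whole content is concentrated in the two equalities of identity components.

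For $W^\circ_\ts = W^\circ_{\tphi,\ts}$, I would first invoke Working Hypothesis \ref{workhyp} directly for $\bJ=\tbG$ applied to the Levi $\tbM$: since $\ts \in \Pi_{\tphi}(\tM)$ with $\tphi \in \Phi_{\disc}(\tM)$, the hypothesis literally says $W^\circ_\ts = W^\circ_{\ts,\tphi}$, and $W^\circ_{\ts,\tphi} = W^\circ_\tphi \cap W(\ts) = W^\circ_{\tphi,\ts}$ by definition. That one is essentially free. The substance is transporting this down to $\bG$, i.e. establishing $W^\circ_\sigma = W^\circ_{\phi,\sigma}$. Here I would argue as follows. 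On the one hand, $W^\circ_\sigma$ is generated by reflections in the roots $\Delta'_\sigma$, which is governed by the rank-one Plancherel measures $\mu_\alpha(\sigma)$; the compatibility of Plancherel measures under restriction (the unconditional Plancherel-measure input the introduction alludes to, via \eqref{more intro}) forces $W^\circ_\ts = W^\circ_\sigma = W^\circ_{\tphi,\ts}$. On the other hand, by Proposition \ref{a key prop} we have $W^\circ_\tphi = W^\circ_\phi$ as subgroups of $W_M$ (both being computed inside the common adjoint group $\widehat{G_{\scn}}$, using $\bar C^\circ_\tphi = \bar C^\circ_\phi$ from Proposition \ref{lm for identity comp}), so intersecting with $W(\sigma)$ and using $W(\sigma) \subset W_\phi$ yields $W^\circ_{\phi,\sigma} = W^\circ_\phi \cap W(\sigma) = W^\circ_\tphi \cap W(\sigma)$. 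Corollary \ref{cor for inclusion} tells us $W^\circ_{\phi,\sigma} \subset W(\ts)$, hence $W^\circ_{\phi,\sigma} \subset W^\circ_\tphi \cap W(\ts) = W^\circ_{\tphi,\ts}$; combined with the Plancherel-measure equality $W^\circ_{\tphi,\ts} = W^\circ_\sigma$ this gives one inclusion, and the reverse inclusion $W^\circ_\sigma \subset W^\circ_{\phi,\sigma}$ comes the same way once one knows $W^\circ_\sigma \subset W_\phi$ (again from $W(\sigma)\subset W_\phi$ since $W^\circ_\sigma \subset W(\sigma)$) and $W^\circ_\sigma \subset W^\circ_\phi$, the latter being exactly where one needs that the reflections cutting out $W^\circ_\sigma$ already lie in the connected centralizer group. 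Assembling, $W^\circ_\sigma = W^\circ_{\phi,\sigma}$, and likewise the $\ts$-version.

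Finally, combining: $R_\sigma = W(\sigma)/W^\circ_\sigma = W_{\phi,\sigma}/W^\circ_{\phi,\sigma} = R_{\phi,\sigma}$, which is \eqref{arthur conj for G}, and $R_\ts = W(\ts)/W^\circ_\ts = W_{\tphi,\ts}/W^\circ_{\tphi,\ts} = R_{\tphi,\ts}$, which is \eqref{arthur conj for tG}. The isomorphisms are canonical since all four groups sit inside the appropriate Weyl group $W_M$ (resp.\ $W_{\tM}$) and the identifications are the tautological quotient maps.

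I expect the main obstacle to be the precise bookkeeping in the step $W^\circ_\sigma = W^\circ_{\tphi,\ts}$: one must make sure the identification of $W^\circ_\sigma$ with the group generated by reflections in $\Delta'_\sigma$ is matched, via the invariance of rank-one Plancherel measures under restriction from $\tM$ to $M$ (and the analogous statement for $\ts$), with the group generated by reflections in $\Delta'_\ts$, and that this in turn coincides with the abstract $W^\circ_\tphi \cap W(\ts)$ coming out of Working Hypothesis \ref{workhyp}. The delicate point is that the restriction $\Res^{\tM}_M$ may split $\ts$ into several constituents $\sigma$, so one has to check the Plancherel measure $\mu_\alpha(\sigma)$ depends only on $\ts$ (equivalently only on $\tphi$) and not on the chosen constituent — this is the standard fact that Plancherel measures are preserved under $\Res$ and under twisting by characters of $\tM/M$, which is unconditional, but it must be cited and applied carefully. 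Everything else is formal manipulation of the exact sequences and inclusions already in hand.
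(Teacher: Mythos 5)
Your proposal is correct and follows essentially the same route as the paper's proof: the reduction to $W(\sigma)=W_{\phi,\sigma}$, $W(\ts)=W_{\tphi,\ts}$ via Lemma \ref{a inclusion on W}, the Plancherel-measure equality $W^\circ_{\ts}=W^\circ_{\sigma}$, the Working Hypothesis giving $W^\circ_{\ts}=W^\circ_{\tphi,\ts}$, and the transfer $W^\circ_{\tphi}=W^\circ_{\phi}$ from Proposition \ref{a key prop} together with Lemma \ref{lm imp1} are exactly the ingredients of the paper's chain \eqref{equal for si}, merely reorganized as two inclusions instead of a string of equalities. The inclusion $W^\circ_{\sigma}\subset W^\circ_{\phi}$ that you flag as the delicate point is already a formal consequence of what you established, namely $W^\circ_{\sigma}=W^\circ_{\tphi,\ts}\subset W^\circ_{\tphi}=W^\circ_{\phi}$.
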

\begin{proof}
Due to Lemma \ref{a inclusion on W}, we have  
\begin{equation} \label{reduction for W}
W(\sigma) = W_{\phi, \sigma} = W_\phi \cap W(\sigma)~~ \text{ and } ~~ W(\ts) = W_{\tphi, \ts} = W_{\tphi} \cap W(\ts)
\end{equation}
Since the Plancherel measure is compatible with restriction (see \cite[Proposition 2.4]{choiy1}, \cite[Lemma 2.3]{go06}, for example) and $\Phi(P, A_M) = \Phi(\tP, A_{\tM}),$
we have
\begin{equation} \label{more}
W^{\circ}_{\ts} = W^{\circ}_{\sigma}.
\end{equation}
Using \eqref{hyp for W0} for $\bJ=\tbG,$ we  have
\begin{equation} \label{reduction for W'}
W^{\circ}_{\sigma} = W^{\circ}_{\tphi, \ts}.
\end{equation}
We then have
\begin{eqnarray} \label{equal for si}
W^\circ_{\phi, \sigma} &=& W^\circ_{\phi} \cap W(\sigma) \\
\nonumber
& \overset{\eqref{last arg for Arthur conj}}{=}& W^\circ_{\tphi} \cap W(\sigma) \\
\nonumber
 &\overset{\eqref{equ lm imp1}}{=}& 
 W^\circ_{\tphi} \cap W(\ts) \cap W(\sigma) \\
 \nonumber
& =& W^\circ_{\tphi, \ts} \cap W(\sigma)  \\
\nonumber
&\overset{\eqref{reduction for W'}}{=}& W^\circ_{\sigma} \cap W(\sigma) \\
\nonumber
&=& W^\circ_\sigma.
 \end{eqnarray}

Thus, combining \eqref{reduction for W} -- \eqref{equal for si}, 
  Arthur conjectures \eqref{arthur conj for tG} and \eqref{arthur conj for G} are now proved.
\end{proof}

\subsection{Proof of Arthur's conjectures under Working Hypothesis \ref{workhyp} for $\bJ=\bG$} \label{sec arthur conj from G} 

Then, under the validity of \eqref{pre temp llc} for tempered representations of $\bJ=\bG$ and for discrete series representations of $M_J=M,$  we construct tempered $L$-packets of $\tG,$ lifting representations from $G$ to $\tG$ and from $M$ to $\tM.$

Let $\phi \in \Phi_{\temp}(G)$ be given.  As in Section \ref{sec arthur conj from tG}, there is a lift $\tphi \in \Phi_{\temp}(\tG).$ 
For each $\sigma \in \Pi_{\phi}(G),$ choose $\ts_{0, \sigma} \in \Irr_{\temp}(\tG)$ such that $\sigma \subset \Res_G^{\tG}({0, \sigma} ),$ due to \cite[Lemma 2.3]{gk82} and \cite[Proposition 2.2]{tad92}. Note if we have $\ts_1 \in  \Irr_{\temp}(\tG)$ with $\sigma \subset \Res_G^{\tG}(\ts_1),$ then $\ts_1$ must be of the form $\ts_1\s {0, \sigma}  \tchi$ for some $\tchi \in \Hom(\tG/G, \CC^{\times}).$ 
We have 
\begin{equation} \label{iso 1cocycle}
\Hom(\tG/G, \CC^\times) \s H^1(F, \widehat{\tG/G}),
\end{equation}
due to the local class field theory (or local Langlands correspondence for tori $\Hom(T, \CC^\times) \s \Phi(T),$ cf. \cite[Theorem 1]{lan97olga} and \cite[Theorem, p.179]{yu09})). We then define 
\[
X^{\tG}(\tphi) :=\{ \teta \in \Hom(\tG/G, \CC^\times):  \teta \tphi \s \tphi \}.
\]
Note that this set $X^{\tG}(\tphi)$ is a finite abelian group consisting of unitary characters and can be defined in terms of cohomological classes (refer to \cite[Remark 4.8]{} and  \cite[p.74]{chaoli}).
Using the $L$-packet $\Pi_{\phi}(G)$ for $\phi \in \Phi_{\temp}(G)$ in Working Hypothesis \ref{workhyp} for $\bJ=\bG,$ we construct an $L$-packet  $\Pi_{\tphi}(G)$ for $\tphi \in \Phi_{\temp}(\tG)$ as the set of isomorphism classes 
\[
\Pi_{\tphi}(\tG) := \{  \ts_{0, \sigma} \otimes \teta \in {\Irr}_{\temp}{\tG} : ~~ \sigma \in \Pi_{\phi}(G),~~  \teta \in X^{\tG}(\tphi)  \} / \s.
\]

\begin{rem}
For our purpose of the study of $R$-groups, we do not further discuss here, the stability of this $L$-packet via the endoscopic character identity, its internal structure, etc, nor does argue whether our $L$-packet is compatible with Xu's definition in \cite{xu15}. 
\end{rem}
\begin{lm} \label{dijoint lem}
Let $\tphi_1, \tphi_2 \in \Phi_{\temp}(\tG)$ be given. Suppose $\Pi_{\tphi_1}(\tG) \cap \Pi_{\tphi_2}(\tG) \neq \emptyset.$ Then we have 
\[
{\tphi}_1 \s {\tphi}_2 \chi^\sharp
\]
for some $ \chi^\sharp \in X^{\tG}(\tphi).$
\end{lm}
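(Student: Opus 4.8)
The plan is to prove the contrapositive-style assertion by unwinding the definition of the two constructed $L$-packets and tracking how elements transform under twisting. First I would take a representation $\ts$ lying in the intersection $\Pi_{\tphi_1}(\tG) \cap \Pi_{\tphi_2}(\tG)$. By the very definition of $\Pi_{\tphi_i}(\tG)$ given just above, there exist $\sigma_i \in \Pi_{\phi_i}(G)$ and $\teta_i \in X^{\tG}(\tphi_i)$ with $\ts \s \ts_{0,\sigma_i} \otimes \teta_i$ for $i=1,2$. Here $\phi_i = pr \circ \tphi_i$, and the $\sigma_i$ are irreducible constituents of $\Res^{\tG}_G(\ts)$; but the restriction of a fixed $\ts$ to $G$ is one well-defined (multiplicity-free) set of constituents, so $\sigma_1$ and $\sigma_2$ are $G$-conjugate, hence lie in a common $L$-packet $\Pi_\phi(G)$ by Working Hypothesis \ref{workhyp} applied to $\bJ = \bG$. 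In particular $\phi_1 \s \phi_2$ as parameters of $G$.

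Next I would compare $\ts_{0,\sigma_1}$ and $\ts_{0,\sigma_2}$. Since $\sigma_1 \s {}^x\!\sigma_2$ for some $x$ realizing the conjugacy inside the restriction (or, after replacing $\ts_{0,\sigma_2}$ by a conjugate, simply $\sigma_1 \s \sigma_2$), and both $\ts_{0,\sigma_1}$ and $\ts_{0,\sigma_2}$ are lifts of the same $\sigma_1$ to $\Irr_{\temp}(\tG)$, the remark preceding Lemma \ref{dijoint lem} shows $\ts_{0,\sigma_1} \s \ts_{0,\sigma_2} \otimes \tchi_0$ for some $\tchi_0 \in \Hom(\tG/G,\CC^\times)$. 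Feeding this into $\ts \s \ts_{0,\sigma_1}\otimes\teta_1 \s \ts_{0,\sigma_2}\otimes\teta_2$ gives $\ts_{0,\sigma_2}\otimes(\tchi_0\teta_1) \s \ts_{0,\sigma_2}\otimes\teta_2$, and since twisting by distinct characters of $\tG/G$ produces non-isomorphic (or at least distinguishable) representations up to the stabilizer, I conclude $\teta_2 \teta_1^{-1} = \tchi_0$ modulo the subgroup of characters fixing $\ts_{0,\sigma_2}$; in any case $\tchi_0 \in \Hom(\tG/G,\CC^\times)$ and its class is controlled.

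Finally I would transport this relation to the level of $L$-parameters. Applying the local Langlands correspondence for tempered representations of $\tG$ (available under Working Hypothesis \ref{workhyp} for $\bJ=\bG$, via the packet construction), the isomorphism $\ts_{0,\sigma_1}\otimes\teta_1 \s \ts_{0,\sigma_2}\otimes\teta_2$ and the fact that twisting a representation by $\teta$ corresponds under $\L$ to twisting its parameter by the associated cocycle (local class field theory, as in \eqref{iso 1cocycle} and \cite[Section 7]{la85}) yield $\tphi_1 \teta_1 \s \tphi_2 \teta_2$ as parameters, hence $\tphi_1 \s \tphi_2 \cdot (\teta_2\teta_1^{-1})$ with $\chi^\sharp := \teta_2\teta_1^{-1}$. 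The remaining point is that $\chi^\sharp$ actually lies in $X^{\tG}(\tphi)$ (i.e. $\chi^\sharp \tphi \s \tphi$ for the relevant $\tphi$): this follows because $\tphi_1 \s \tphi_2\chi^\sharp$ together with $\phi_1 \s \phi_2$ forces $\chi^\sharp$ to be in the kernel of restriction to $\widehat G$-conjugacy classes, so $\chi^\sharp \tphi_2 \s \tphi_2$, placing $\chi^\sharp$ in $X^{\tG}(\tphi_2) = X^{\tG}(\tphi)$. The main obstacle I anticipate is the bookkeeping of characters up to the stabilizer subgroups $X^{\tG}(\tphi_i)$ — i.e. making precise the step "$\teta_2\teta_1^{-1}$ is well-defined as a class" — since the packet construction only pins down representations up to twisting by $X^{\tG}$; one must check this ambiguity is exactly absorbed by the assertion's "for some $\chi^\sharp \in X^{\tG}(\tphi)$" and does not leak elsewhere.
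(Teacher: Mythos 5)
Your first half is sound and in fact more detailed than the paper's own argument: you take $\ts$ in the intersection, observe that the constituents of $\Res^{\tG}_{G}(\ts)$ are all $\tG$-conjugate and hence lie in a single $L$-packet of $G$, and conclude $\phi_1\s\phi_2$ from the disjointness of packets for $G$ (the paper simply asserts $pr\circ\tphi_1=pr\circ\tphi_2$). The gap is in your final step. To pass from the isomorphism of representations $\ts_{0,\sigma_1}\otimes\teta_1\s\ts_{0,\sigma_2}\otimes\teta_2$ to an isomorphism of parameters you invoke ``the local Langlands correspondence for tempered representations of $\tG$'' together with its compatibility with character twists. But in this subsection no such correspondence for $\tG$ is available: Working Hypothesis \ref{workhyp} is assumed only for $\bJ=\bG$, and the assignment $\tphi\mapsto\Pi_{\tphi}(\tG)$ is precisely what is being \emph{defined}; the lemma you are proving is the disjointness statement one would need before any such twist-compatibility could even be formulated, so the step is circular. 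The paper's route stays entirely on the dual side and needs no representation-theoretic input beyond $\phi_1\s\phi_2$: once the projections agree, $\tphi_1$ and $\tphi_2$ are two liftings of the same $\phi$ through the central extension $1\to\widehat{\tG/G}\to\widehat{\tG}\to\widehat{G}\to 1$, and by Labesse \cite[Section 7]{la85} (as recalled after \eqref{projection}) two such liftings differ by a $1$-cocycle $\chi^\sharp$ of $W_F$ in $\widehat{\tG/G}$.

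Separately, your justification that $\chi^\sharp$ lies in $X^{\tG}(\tphi)$ does not work. From $\tphi_1\s\tphi_2\chi^\sharp$ and $\phi_1\s\phi_2$ you infer that $\chi^\sharp$ is ``in the kernel of restriction,'' hence $\chi^\sharp\tphi_2\s\tphi_2$. But since $\chi^\sharp$ takes values in $\widehat{\tG/G}=\ker(pr)$, the equality $pr\circ(\tphi_2\chi^\sharp)=pr\circ\tphi_2$ holds for \emph{every} such cocycle and carries no information; it does not give $\tphi_2\chi^\sharp\s\tphi_2$ in $\widehat{\tG}$ (if it did, the lemma would assert $\tphi_1\s\tphi_2$ outright, which is false in general). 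Note that the paper's own proof also only places $\chi^\sharp$ in $H^1(W_F,\widehat{\tG/G})$, which is all the argument yields and all that is used later (in Lemma \ref{a inclusion on W 2} the twist is subsequently killed by projecting to $\widehat{M}$); the conclusion should be read that way rather than forced into the stabilizer $X^{\tG}(\tphi)$.
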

\begin{proof}
Since $\Pi_{\tphi_1}(\tG) \cap \Pi_{\tphi_2}(\tG) \neq \emptyset,$ we have 
\[
pr \circ {\tphi}_1 = pr \circ {\tphi}_2, 
\]
where $pr: \widehat{\tG} \twoheadrightarrow \widehat{G}.$ It then follows that ${\tphi}_1 \s {\tphi}_2 \chi^\sharp$
for some $1$-cocycle $\chi^\sharp$ of $W_F$ in $\widehat{(\tG/G)}.$ This completes the proof.
\end{proof}

Let $\bM$ be an $F$-Levi subgroup of $\bG.$
Given $\phi \in \Phi_{\disc}(M),$ we lift it to $\tphi \in \Phi_{\disc}(\tM),$ as in Section \ref{a key proposition}. 

From Working Hypothesis \ref{workhyp}, we construct an $L$-packet $\Pi_{\tphi}(\tM)$ for $\tphi \in \Phi_{\disc}(\tM):$
\[
\Pi_{\tphi}(\tM) := \{  \ts_{0, \sigma} \otimes \teta \in {\Irr}_{\disc}({\tM}) : ~~ \sigma \in \Pi_{\phi}(M),~~  \teta \in X^{\tM}(\tphi)  \} / \s.
\]

Now, under Working Hypotheses \ref{workhyp} for $\bJ=\bG$, we will prove the following Arthur's conjecture for $\tG$ and $G.$
\begin{thm} \label{thm from G}
Assume that Working Hypotheses \ref{workhyp} is valid for $\bJ=\bG.$
Let $\phi \in \Phi_{\disc}(M)$ be given.  Choose a lifting $\tphi \in \Phi_{\disc}(\tM)$ such that $\phi = pr \circ \tphi,$ with the projection $pr: \widehat{\tM} \twoheadrightarrow \widehat{M}.$
Given $\ts \in \Pi_{\tphi}(\tM)$ and $\sigma \in \Pi_{\phi}(M),$ 
we have
\begin{equation}  \label{arthur conj for tG 2}
R_{\ts} \s R_{\tphi, \ts},
\end{equation}
and
\begin{equation}  \label{arthur conj for G 2}
R_{\sigma} \s R_{\phi, \sigma}.
\end{equation}
\end{thm}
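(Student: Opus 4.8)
The plan is to reproduce the argument of Theorem~\ref{thm from tG} with the roles of $\tG$ and $G$ --- equivalently, of restriction and of lifting --- interchanged. As in that proof, both \eqref{arthur conj for tG 2} and \eqref{arthur conj for G 2} reduce, through $R_\sigma=W(\sigma)/W^\circ_\sigma$ and $R_{\ts}=W(\ts)/W^\circ_{\ts}$, to the two pairs of identities $W(\sigma)=W_{\phi,\sigma}$, $W^\circ_\sigma=W^\circ_{\phi,\sigma}$ and $W(\ts)=W_{\tphi,\ts}$, $W^\circ_{\ts}=W^\circ_{\tphi,\ts}$. Throughout I would use two \emph{unconditional} inputs: Proposition~\ref{a key prop}, which gives $W^\circ_\phi=W^\circ_{\tphi}$ as one and the same subgroup of $W_M=W_{\tM}$ (because $\bar C^\circ_\phi=\bar C^\circ_{\tphi}$ and $\bar T_\phi=\bar T_{\tphi}$ inside the common adjoint group $\widehat{G_{\scn}}$, by Propositions~\ref{lm for identity comp} and \ref{a key prop}); and the compatibility of rank-one Plancherel measures with restriction (\cite[Proposition~2.4]{choiy1}, \cite[Lemma~2.3]{go06}), which together with $\Phi(P,A_M)=\Phi(\tP,A_{\tM})$ yields $\Delta'_\sigma=\Delta'_{\ts}$ and hence $W^\circ_{\ts}=W^\circ_\sigma$.

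In this direction the identity \eqref{arthur conj for G 2} for $\sigma$ is immediate. Since Working Hypothesis~\ref{workhyp} for $\bJ=\bG$ presupposes the validity of \eqref{pre temp llc} for $M$, the set $\Pi_\phi(M)$ is the $L$-packet of $\phi$, so the disjointness in \eqref{temp llc} forces ${}^w\sigma\s\sigma\Rightarrow{}^w\phi\s\phi$; as the elements of $W_\phi$ stabilize $\bA_{\widehat M}$ (cf.\ \cite[p.60]{mok13}) this gives $w\in W_\phi$, so $W(\sigma)\subseteq W_\phi$ and $W(\sigma)=W_{\phi,\sigma}$. And $W^\circ_\sigma=W^\circ_{\phi,\sigma}$ is precisely \eqref{hyp for W0} applied with $\bJ=\bG$ and $\bM_{\bJ}=\bM$. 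Hence $R_\sigma\s R_{\phi,\sigma}$, which is \eqref{arthur conj for G 2}.

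For \eqref{arthur conj for tG 2} I would first record the analogues of Lemma~\ref{a inclusion on W}\eqref{for ts} and of Lemma~\ref{lm imp1}. If ${}^w\ts\s\ts$, then $\ts$ lies in $\Pi_{{}^w\tphi}(\tM)\cap\Pi_{\tphi}(\tM)$, which is nonempty; by the $\tM$-analogue of Lemma~\ref{dijoint lem} this forces ${}^w\tphi\s\tphi$, and since the elements of $W_{\tphi}$ stabilize $\bA_{\widehat{\tM}}$ (cf.\ \cite[p.60]{mok13}) we get $w\in W_{\tphi}$; thus $W(\ts)\subseteq W_{\tphi}$ and $W(\ts)=W_{\tphi,\ts}$. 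The proof of Lemma~\ref{lm imp1}, which uses only the finite groups $X^{\tM}(\cdot)$ of \eqref{def of X}, the local Langlands correspondence for tori (\cite[Theorem~1]{lan97olga}, \cite[p.179]{yu09}) and the disjointness of the $\tM$-packets, goes through to give $W_{\tphi}\cap W(\sigma)\subseteq W(\ts)$. Granting these, and using $W(\ts)=W_{\tphi,\ts}$ together with $W^\circ_{\ts}=W^\circ_\sigma$, the remaining isomorphism \eqref{arthur conj for tG 2} reduces to $W^\circ_\sigma=W^\circ_{\tphi,\ts}$; one inclusion is formal, namely $W^\circ_\sigma=W^\circ_{\phi,\sigma}=W^\circ_\phi\cap W(\sigma)=W^\circ_{\tphi}\cap W(\sigma)\subseteq W^\circ_{\tphi}\cap W(\ts)=W^\circ_{\tphi,\ts}$, using the second paragraph, Proposition~\ref{a key prop}, and the inclusion just recorded.

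The reverse inclusion $W^\circ_{\tphi,\ts}\subseteq W^\circ_\sigma$ is the crux, and it is here that the argument genuinely departs from that of Theorem~\ref{thm from tG}, where this was handed to us by \eqref{hyp for W0} for $\bJ=\tbG$. The plan is to deduce it from the stronger statement $W^\circ_{\tphi}\cap W(\ts)\subseteq W(\sigma)$, which then yields $W^\circ_{\tphi,\ts}=W^\circ_{\tphi}\cap W(\ts)\subseteq W^\circ_{\tphi}\cap W(\sigma)=W^\circ_\sigma$ by Proposition~\ref{a key prop} and the second paragraph. To prove $W^\circ_{\tphi}\cap W(\ts)\subseteq W(\sigma)$: an element $w\in W^\circ_{\tphi}$ is represented by an element of the \emph{connected} group $C_{\tphi}(\widehat{\tG})^\circ$, hence acts trivially by conjugation on $\pi_0(C_{\tphi}(\widehat{\tG}))$, and therefore --- via the exact sequence of Proposition~\ref{lm for identity comp} relating $C_{\tphi}(\widehat{\tG})/Z(\widehat{\tG})^\Gamma$, $C_{\phi_{\scn}}(\widehat{G_{\scn}})$ and the finite group $X^{\tG}(\tphi)$, together with the Clifford-theoretic description of the constituents of $\Res^{\tM}_M(\ts)$ as the $\tM$-conjugates of $\sigma$ (\cite[Lemmas~2.3, 2.4]{gk82}, \cite[Proposition~2.2, Corollary~2.5]{tad92}) --- such a $w$ acts trivially on the set of those constituents once it already stabilizes $\ts$; in particular it fixes $\sigma$, i.e.\ $w\in W(\sigma)$. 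Making this last point precise --- matching the $\pi_0$-parametrization on the $L$-group side with the fibres of the restriction map out of $\Pi_{\tphi}(\tM)$, which are torsors under $X^{\tM}(\tphi)$, and tracking how the $W_M$-action permutes them --- is the main obstacle; it is purely group-theoretic, but requires care. Granting it, $W^\circ_\sigma=W^\circ_{\tphi,\ts}$, and then $R_{\ts}=W(\ts)/W^\circ_{\ts}\s W_{\tphi,\ts}/W^\circ_{\tphi,\ts}=R_{\tphi,\ts}$, completing the proof.
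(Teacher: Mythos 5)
Your reduction of both isomorphisms to the four identities $W(\sigma)=W_{\phi,\sigma}$, $W^\circ_\sigma=W^\circ_{\phi,\sigma}$, $W(\ts)=W_{\tphi,\ts}$, $W^\circ_{\ts}=W^\circ_{\tphi,\ts}$, and your treatment of all but the last, coincide with the paper's: the disjointness arguments are Lemma \ref{a inclusion on W 2} (with Lemma \ref{dijoint lem} supplying the disjointness for the lifted packets), the Plancherel-measure step is \eqref{more 2}, the hypothesis \eqref{hyp for W0} for $\bJ=\bG$ gives \eqref{reduction for W' 2}, and your ``formal'' inclusion $W^\circ_{\ts}\subseteq W^\circ_{\tphi,\ts}$ is exactly the chain \eqref{equal for si 2} (equivalently the $\supset$ direction of Lemma \ref{lm imp2}), resting on Proposition \ref{a key prop} and the transported Lemma \ref{lm imp1}.

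The genuine gap is where you write ``granting it'': the reverse inclusion $W^\circ_{\tphi,\ts}\subseteq W^\circ_{\ts}$, which you propose to deduce from the unproven claim $W^{\circ}_{\tphi}\cap W(\ts)\subseteq W(\sigma)$ via a $\pi_0$/Clifford-theoretic matching that you do not carry out. This is precisely the crux of the theorem, and it is the one place where the paper argues differently: Lemma \ref{lm imp2} closes it by contradiction --- an element $w\in W^{\circ}_{\tphi}\cap W(\ts)$ not lying in $W^\circ_{\ts}$ (hence, by the already-established identities, not in $W(\sigma)$) would have nontrivial image in $R_{\ts}$ but trivial image in $R_{\tphi}$, contradicting the embedding $R_{\ts}\subset R_{\tphi}$ --- rather than by tracking how $W^{\circ}_{\tphi}$ permutes the fibres of ${\Res}^{\tM}_{M}$. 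So as written your argument is incomplete at its decisive step: a $w\in W(\ts)$ a priori only permutes the irreducible constituents of ${\Res}^{\tM}_{M}(\ts)$, and you would genuinely have to exploit that $w$ comes from the connected centralizer to conclude it fixes $\sigma$ itself; alternatively, import the injectivity of $R_{\ts}\hookrightarrow R_{\tphi}$ as the paper does. If your sketched route were completed it would yield a statement slightly stronger than what the paper records, but in its present form it is a gap, not a proof.
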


To this end, we begin with the following Lemma.
\begin{lm}  \label{a inclusion on W 2}
Assume that Working Hypotheses \ref{workhyp} is valid for $\bJ=\bG.$ We identify $W_\phi, W_{\tphi}$ with a subgroup of $W_M, W_{\tM}.$
We have 
\begin{equation} \label{for s 2}
W(\sigma) \subset W_\phi
\end{equation}
and
\begin{equation} \label{for ts 2}
W(\ts) \subset W_{\tphi},
\end{equation}
\end{lm}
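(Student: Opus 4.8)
The plan is to mirror the structure of the proof of Lemma \ref{a inclusion on W}, but running the lifting argument in the opposite direction: there we restricted representations from $\tG$ to $G$, whereas now we have built $L$-packets on $\tG$ (resp. $\tM$) by \emph{inducing up} from $G$ (resp. $M$) via the finite character group $X^{\tG}(\tphi)$ (resp. $X^{\tM}(\tphi)$). So the key input replacing the disjointness used before is Lemma \ref{dijoint lem}: if two $L$-packets on $\tG$ (or $\tM$) share a constituent, their parameters differ by a cocycle in $X$.

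First I would prove \eqref{for s 2}. Let $w \in W_M$ with $^w\sigma \s \sigma$. By construction $\ts \in \Pi_{\tphi}(\tM)$ has the form $\ts_{0,\sigma_0} \otimes \teta$ for some $\sigma_0 \in \Pi_\phi(M)$ and $\teta \in X^{\tM}(\tphi)$, so $\sigma \hookrightarrow \Res^{\tM}_M(\ts)$ for an appropriate choice, and $^w\ts$ is another irreducible tempered representation of $\tM$ whose restriction contains $^w\sigma \s \sigma$. Hence, by \cite[Lemma 2.4]{gk82} and \cite[Corollary 2.5]{tad92}, $^w\ts \s \ts\tchi$ for some $\tchi \in (\tM/M)^D$. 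Now $^w\ts \in \Pi_{^w\tphi}(\tM)$ while $\ts\tchi \in \Pi_{\tphi\tchi}(\tM)$, so these two $L$-packets of $\tM$ intersect, and Lemma \ref{dijoint lem} (in its $\tM$-version) gives $^w\tphi \s \tphi \cdot \chi^\sharp$ for some $\chi^\sharp \in X^{\tM}(\tphi)$; applying the projection $pr\colon \widehat{\tM} \twoheadrightarrow \widehat M$ kills the cocycle and yields $^w\phi \s \phi$. Since elements of $W_\phi$ are exactly those stabilizing $\bA_{\widehat M}$ (cf. \cite[p.60]{mok13}), we get $w \in W_\phi$.

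Next I would deduce \eqref{for ts 2} from \eqref{for s 2}. Let $w \in W_{\tM}$ with $^w\ts \s \ts$. Restricting to $M$, any $\sigma \hookrightarrow \Res^{\tM}_M(\ts)$ satisfies $^w\sigma \hookrightarrow \Res^{\tM}_M(^w\ts) \s \Res^{\tM}_M(\ts)$, so $^w\sigma \s \sigma\eta$ for some character $\eta$ of $M$ trivial on the relevant subgroup; in particular $^w\sigma$ and $\sigma$ lie in the same $L$-packet $\Pi_\phi(M)$ up to such a twist, so by the already-established \eqref{for s 2} applied to $^w\sigma$ we have $w \in W_\phi$, and then $^w\phi \s \phi$. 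Lifting, $^w\tphi$ and $\tphi$ both project to $\phi$, hence $^w\tphi \s \tphi\chi^\sharp$ for a cocycle $\chi^\sharp$ in $\widehat{\tM/M}$; but since $^w\ts \s \ts$ forces $^w\tphi \s \tphi$ outright by the same intersection argument via Lemma \ref{dijoint lem}, and elements of $W_{\tphi}$ stabilize $\bA_{\widehat\tM}$, we conclude $w \in W_{\tphi}$.

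The main obstacle will be bookkeeping the interaction between the character-twist ambiguity coming from restriction/induction (the $\tchi$'s and $\teta$'s) and the cocycle ambiguity in the lifting of parameters, and making sure that in the $\tM$-version of Lemma \ref{dijoint lem} the twisting cocycle genuinely lands in the finite group $X^{\tM}(\tphi)$ rather than in the larger group of all cocycles — this is where one uses that $\ts$ and $\ts\tchi$ are honest members of $L$-packets built from $X^{\tM}$, together with the local Langlands correspondence for tori identifying $\Hom(\tM/M,\CC^\times) \s H^1(F,\widehat{\tM/M})$ as in \eqref{iso 1cocycle}. Once that compatibility is pinned down, both inclusions follow formally as above, and the remainder of the proof of Theorem \ref{thm from G} then proceeds exactly as in Theorem \ref{thm from tG}, using Propositions \ref{lm for identity comp} and \ref{a key prop}, Corollary-type inclusions, and the Plancherel-measure compatibility.
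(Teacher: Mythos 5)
Your argument reaches the right conclusions, but for \eqref{for s 2} you take a noticeably longer route than the paper and in doing so miss the main point of switching the hypothesis to $\bJ=\bG$. In this section the local Langlands correspondence, and hence disjointness of $L$-packets, is assumed \emph{directly} for $M$ (discrete series of $M_J=M$). So from $^w\sigma\s\sigma$ the paper simply reads off $\sigma\in\Pi_{^w\phi}(M)\cap\Pi_{\phi}(M)$ and concludes $^w\phi\s\phi$ by disjointness on $M$ itself --- one line, with no lifting to $\tM$, no Gelbart--Knapp/Tadi\'c restriction analysis, and no appeal to Lemma \ref{dijoint lem}. Your replay of the Lemma \ref{a inclusion on W} argument (lift to $\ts$, obtain $^w\ts\s\ts\tchi$, intersect packets on $\tM$, project back down) does work, because the projection $pr$ kills every character/cocycle ambiguity you accumulate along the way, but it imports machinery (the construction of $\Pi_{\tphi}(\tM)$ and an $\tM$-version of Lemma \ref{dijoint lem}) that is only needed for \eqref{for ts 2}; it also forces you to track that the twisting cocycle need not lie in $X^{\tM}(\tphi)$ until after projection, which is exactly the bookkeeping headache you flag at the end.

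For \eqref{for ts 2} you ultimately land on the paper's argument: the packets $\Pi_{^w\tphi}(\tM)$ and $\Pi_{\tphi}(\tM)$ share $\ts$, so by Lemma \ref{dijoint lem} we get $^w\tphi\s\tphi\chi^\sharp$ with $\chi^\sharp\in X^{\tM}(\tphi)$, hence $^w\tphi\s\tphi$ and $w\in W_{\tphi}$. But your preliminary detour through $\Res^{\tM}_{M}$ is both unnecessary and slightly off: you invoke ``\eqref{for s 2} applied to $^w\sigma$'' at a point where what you actually know is $^w\sigma\s\sigma\eta$ for some character $\eta$, not $^w\sigma\s\sigma$, so \eqref{for s 2} as stated does not apply to that $w$. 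Since you then discard this branch in favor of the direct intersection argument, no damage is done, but the detour should be deleted rather than patched.
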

\begin{proof}
For \eqref{for s 2}, we let $w \in W_M$ be given such that $^w\sigma \s \sigma.$ This yields $ \sigma \in \Pi_{^w\phi}(G) \cap \Pi_{\phi}(G),$ and by the disjointness in Working Hypothesis \ref{workhyp} for $L$-packets of $G$ we have  $^w\phi \s \phi.$ Thus, since the elements of $W_{\phi}$ stabilize $\bA_{\widehat M}$ by definition, it thus follows that $w$ lies in $W_{\phi}.$

For \eqref{for ts 2}, we let $w \in W_{\tM}$ be given such that $^w\ts \s \ts.$ Since $\ts \in \Pi_{^w\tphi}(\tG) \cap \Pi_{\tphi}(\tG),$ due to Lemma \ref{dijoint lem}, we have 
\[
^w\tphi \s \tphi \chi^\sharp
\]
for some $\chi^\sharp \in X^{\tM}(\tphi).$ This yields $^w\tphi \s \tphi$ and then the elements of $W_{\tphi}$ stabilize $\bA_{\widehat \tM}.$ Therefore, it follows that $w$ lies in $W_{\tphi}.$
\end{proof}

\begin{lm} \label{lm imp2}
Assume that Working Hypotheses \ref{workhyp} is valid for $\bJ=\bG.$ We identify $W_\phi, W_{\tphi}$ with a subgroup of $W_M, W_{\tM}.$
We have 
\[
W^\circ_{\tphi, \ts} = W^\circ_{\ts} 
\]
\end{lm}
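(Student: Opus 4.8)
The plan is to mirror, on the $\bJ=\bG$ side, the chain of equalities \eqref{equal for si} that was carried out on the $\bJ=\tbG$ side, replacing the roles of $G$ and $\tG$ by arguing with the lift rather than the restriction. First I would record the two identifications that Lemma \ref{a inclusion on W 2} provides, namely $W(\sigma)=W_{\phi,\sigma}=W_\phi\cap W(\sigma)$ and $W(\ts)=W_{\tphi,\ts}=W_{\tphi}\cap W(\ts)$; in particular $W^\circ_{\tphi,\ts}=W^\circ_{\tphi}\cap W(\ts)$. Next I would invoke Working Hypothesis \ref{workhyp} for $\bJ=\bG$, which gives $W^\circ_{\sigma}=W^\circ_{\phi,\sigma}=W^\circ_\phi\cap W(\sigma)$. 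Using Proposition \ref{a key prop}, $W^\circ_\phi=W^\circ_{\tphi}$, this reads $W^\circ_\sigma=W^\circ_{\tphi}\cap W(\sigma)$. Finally, since the rank one Plancherel measures match under the lift/restriction $\tM\leftrightarrow M$ and $\Phi(\tP,A_{\tM})=\Phi(P,A_M)$, the same argument as in \eqref{more} gives $W^\circ_{\ts}=W^\circ_\sigma$. Combining these yields $W^\circ_{\ts}=W^\circ_{\tphi}\cap W(\sigma)$.

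The remaining point is to replace the intersection with $W(\sigma)$ by an intersection with $W(\ts)$, i.e. to prove the analogue of Lemma \ref{lm imp1} in this setting: $W_{\tphi}\cap W(\sigma)\subseteq W(\ts)$, or at least the weaker statement $W^\circ_{\tphi}\cap W(\sigma)\subseteq W(\ts)$. Granting that, one computes
\[
W^\circ_{\tphi,\ts}=W^\circ_{\tphi}\cap W(\ts)
\supseteq W^\circ_{\tphi}\cap W(\ts)\cap W(\sigma)
= W^\circ_{\tphi}\cap W(\sigma)
= W^\circ_{\ts},
\]
and conversely $W^\circ_{\ts}\subseteq W^\circ_{\tphi}$ (via $W^\circ_\sigma\subseteq W^\circ_\phi=W^\circ_{\tphi}$) together with $W^\circ_{\ts}=W^\circ_\sigma\subseteq W(\sigma)$ and $W^\circ_{\ts}\subseteq W(\ts)$ gives $W^\circ_{\ts}\subseteq W^\circ_{\tphi}\cap W(\ts)=W^\circ_{\tphi,\ts}$. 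Hence equality.

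For the inclusion $W_{\tphi}\cap W(\sigma)\subseteq W(\ts)$ I would run the contradiction argument of Lemma \ref{lm imp1} almost verbatim: if $w\in W_{\tphi}\cap W(\sigma)$ but $w\notin W(\ts)$, then ${}^w\ts\simeq\ts\tchi\not\simeq\ts$ for some $\tchi\in(\tM/M)^D$, so $\tchi\notin X^{\tM}(\ts)$; on the other hand ${}^w\ts\in\Pi_{{}^w\tphi}(\tM)\cap\Pi_{\tphi\tchi}(\tM)$, and the disjointness of the constructed $L$-packets (Lemma \ref{dijoint lem}, applied to $\tM$) forces ${}^w\tphi\simeq\tphi\tchi$; since $w\in W_{\tphi}$ gives ${}^w\tphi\simeq\tphi$ we get $\tchi\in X^{\tM}(\tphi)$, contradicting $X^{\tM}(\ts)\subseteq X^{\tM}(\tphi)$, which holds by the local Langlands correspondence for tori.

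The main obstacle I anticipate is precisely justifying that disjointness statement for the $L$-packets of $\tM$ that were \emph{constructed by lifting} from $M$ (rather than obtained from a known correspondence): one must check that $\Pi_{\tphi_1}(\tM)\cap\Pi_{\tphi_2}(\tM)\neq\emptyset$ implies $\tphi_1\simeq\tphi_2\chi^\sharp$ for some $\chi^\sharp\in X^{\tM}(\tphi)$. This is the content of Lemma \ref{dijoint lem} transported from $\tG$ to $\tM$, and it rests on the fact that $pr\circ\tphi_i$ is determined by the common constituent together with the uniqueness of the lift up to a $1$-cocycle in $\widehat{\tM/M}$ from \cite[Section 7]{la85}; once this is in hand the rest is bookkeeping with the group-theoretic identities already established in Section \ref{a key proposition}.
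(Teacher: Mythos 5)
There is a genuine gap: you establish only one of the two inclusions. Your displayed chain shows $W^\circ_{\tphi,\ts}=W^\circ_{\tphi}\cap W(\ts)\supseteq W^\circ_{\tphi}\cap W(\sigma)=W^\circ_{\ts}$, and the sentence beginning ``and conversely'' then re-derives the very same inclusion $W^\circ_{\ts}\subseteq W^\circ_{\tphi}\cap W(\ts)=W^\circ_{\tphi,\ts}$; nowhere do you obtain $W^\circ_{\tphi,\ts}\subseteq W^\circ_{\ts}$, so ``hence equality'' does not follow. Moreover, the auxiliary inclusion you set up, $W_{\tphi}\cap W(\sigma)\subseteq W(\ts)$, points the wrong way for the missing half: since your chain identifies $W^\circ_{\ts}$ with $W^\circ_{\tphi}\cap W(\sigma)$, proving $W^\circ_{\tphi}\cap W(\ts)\subseteq W^\circ_{\ts}$ would require something like $W^\circ_{\tphi}\cap W(\ts)\subseteq W(\sigma)$, and that is not automatic: an element $w$ with ${}^w\ts\s\ts$ merely permutes the irreducible constituents of $\Res^{\tM}_{M}(\ts)$ and need not fix the particular constituent $\sigma$.

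The paper closes this direction by a separate contradiction argument: if $w\in W^\circ_{\tphi}\cap W(\ts)$ but $w\notin W^\circ_{\ts}$, then (using $W^\circ_{\ts}=W^\circ_{\phi}\cap W(\sigma)$ and $W^\circ_{\phi}=W^\circ_{\tphi}$) one has $w\notin W(\sigma)$, so $w$ has nontrivial image in $R_{\ts}$ while being trivial in $R_{\tphi}$, contradicting $R_{\ts}\subset R_{\tphi}$. Your $\supseteq$ direction is fine (you in fact give it twice: once via the analogue of Lemma \ref{lm imp1}, once via $W^\circ_{\ts}=W^\circ_{\sigma}=W^\circ_{\phi,\sigma}\subset W^\circ_{\phi}=W^\circ_{\tphi}$, and the second route does not even need the transported disjointness lemma), but you must supply an argument for $W^\circ_{\tphi}\cap W(\ts)\subseteq W^\circ_{\ts}$ along these or similar lines before asserting equality.
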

\begin{proof}
The inclusion $\supset$ is immediate, since we have 
\begin{eqnarray}
\nonumber
W^\circ_{\ts}&=&W^\circ_{\si} \\
\nonumber
&\overset{ \eqref{hyp for W0} }{=}& W^\circ_{\phi, \si}  \\
\nonumber
&\subset& W^\circ_\phi \\
\nonumber
&\overset{\eqref{last arg for Arthur conj}}{=}& W^\circ_{\tphi}.
\end{eqnarray}
We verify the other inclusion $\subset$ by contradiction. Suppose that there is $W^\circ_{\tphi, \ts} \not\subset W^\circ_{\ts}.$ Then there is $w \in W^\circ_{\phi} \cap W(\ts) \setminus W(\sigma).$ This implies that $1 \neq w \in R_{\ts},$ but $w=1 \in R_{\tphi.}$ This contradicts by the fact that $R_{\ts} \subset R_{\tphi}.$ 

\end{proof}

\begin{proof}[Proof of Theorem \ref{thm from tG}]
Due to Lemma \ref{a inclusion on W 2}, we have  
\begin{equation} \label{reduction for W 2}
W(\sigma) = W_{\phi, \sigma} = W_\phi \cap W(\sigma)~~ \text{ and } ~~ W(\ts) = W_{\tphi, \ts} = W_{\tphi} \cap W(\ts)
\end{equation}
Since the Plancherel measure is compatible with restriction (see \cite[Proposition 2.4]{choiy1}, \cite[Lemma 2.3]{go06}, for example) and $\Phi(P, A_M) = \Phi(\tP, A_{\tM}),$
we have
\begin{equation} \label{more 2}
W^{\circ}_{\ts} = W^{\circ}_{\sigma},
\end{equation}
Using \eqref{hyp for W0} for $\bJ=\bG,$ we  have
\begin{equation} \label{reduction for W' 2}
W^{\circ}_{\ts} = W^{\circ}_{\phi, \sigma}.
\end{equation}
By Proposition \ref{lm for identity comp}, 
we thus have
\begin{eqnarray} \label{equal for si 2}
W^\circ_{\tphi, \ts} &= &W^\circ_{\tphi} \cap W(\ts) \\
\nonumber
&\overset{\eqref{last arg for Arthur conj}}{=} & W^\circ_{\phi} \cap W(\ts) \\
\nonumber
&\supset& W^\circ_{\phi} \cap W(\sigma) \cap W(\ts) \\
\nonumber
&=& W^\circ_{\phi, \sigma} \cap W(\ts) \\
\nonumber
&\overset{\eqref{equ cor for inclusion}}{=}& W^\circ_{\phi, \sigma} \\
\nonumber
&\overset{\eqref{reduction for W' 2}}{=} & W^\circ_{\ts}.
\end{eqnarray}

We combine \eqref{reduction for W 2} -- \eqref{equal for si 2} and thus
 Arthur conjectures \eqref{arthur conj for tG 2} and \eqref{arthur conj for G 2} are now proved.
\end{proof}

\begin{rem}
For our purpose of the study of $R$-groups, the choices of $\tphi$ and $\ts_{0, \sigma}$ are independent, i.e., we have $R_{\ts \teta} \s R_{\tphi \teta, \ts \teta}$ for any $\teta.$
\end{rem}

\subsection{Closing remark} \label{sec clos rem}
Our result, under Working Hypothesis \ref{workhyp}, is applied to many cases such as $(\tbG,\bG)=(\GL_n,\SL_n),$ $(\GSp_{2n}, \Sp_{2n}),$ $(\GSO_n, \SO_n),$ $(\GSpin_n, \Spin_n),$ $(\U_n, \SU_n)$ and their $F$-inner forms. In Working Hypothesis \ref{workhyp}, the equality $W_{\sigma_J}^\circ = W_{\sigma_J, \phi_J} ^\circ$ (see  \eqref{hyp for W0}) suggests a uniform way to see Arthur's conjecture for $R$-groups in the setting \eqref{cond on G}. It may further confirm that $\tG$ is true if and only if Arthur's conjecture for $G.$ 


\begin{thebibliography}{10}

\bibitem{art89ast}
James Arthur, \emph{Unipotent automorphic representations: conjectures},
  Ast\'erisque (1989), no.~171-172, 13--71, Orbites unipotentes et
  repr{\'e}sentations, II. \MR{1021499 (91f:22030)}

\bibitem{art12}
\bysame, \emph{The endoscopic classification of representations}, American
  Mathematical Society Colloquium Publications, vol.~61, American Mathematical
  Society, Providence, RI, 2013, Orthogonal and symplectic groups. \MR{3135650}

\bibitem{bg12}
Dubravka Ban and David Goldberg, \emph{{$R$}-groups and parameters}, Pacific J.
  Math. \textbf{255} (2012), no.~2, 281--303.

\bibitem{bggspin}
\bysame, \emph{{$R$}-groups, elliptic representations, and parameters for
  {GS}pin groups}, Forum Math. \textbf{27} (2015), no.~6, 3301--3334.
  \MR{3420342}

\bibitem{baj05}
Dubravka Ban and Yuanli Zhang, \emph{Arthur {$R$}-groups, classical
  {$R$}-groups, and {A}ubert involutions for {${\rm SO}(2n+1)$}}, Compos. Math.
  \textbf{141} (2005), no.~2, 323--343. \MR{2134270 (2006d:22019)}

\bibitem{bo79}
A.~Borel, \emph{Automorphic {$L$}-functions}, Proc. Sympos. Pure Math., XXXIII,
  Amer. Math. Soc., Providence, R.I., 1979, pp.~27--61.

\bibitem{chaoli}
Kuok~Fai Chao and Wen-Wei Li, \emph{Dual {$R$}-groups of the inner forms of
  {${\rm SL}(N)$}}, Pacific J. Math. \textbf{267} (2014), no.~1, 35--90.
  \MR{3163476}

\bibitem{cgsu}
K.~Choiy and D.~Goldberg, \emph{Behavior of {$R$}-groups for {$p$}-adic inner
  forms of quasi-split special unitary groups}, Bull. Iranian Math. Soc.
  \textbf{43} (2017), no.~4, 117--141. \MR{3711825}

\bibitem{choiy1}
Kwangho Choiy, \emph{Transfer of {P}lancherel measures for unitary
  supercuspidal representations between {$p$}-adic inner forms}, Canadian
  Journal of Mathematics \textbf{66} (2014), no.~3, 566--595.

\bibitem{choiymulti}
\bysame, \emph{On multiplicity in restriction of tempered representations of
  {$p$}-adic groups}, Math. Z. \textbf{291} (2019), no.~1-2, 449--471.

\bibitem{chgo12}
Kwangho Choiy and David Goldberg, \emph{Transfer of {$R$}-groups between
  {$p$}-adic inner forms of {$SL_n$}}, Manuscripta Math. \textbf{146} (2015),
  no.~1-2, 125--152. \MR{3294420}

\bibitem{cgclassic}
\bysame, \emph{Invariance of {$R$}-groups between {$p$}-adic inner forms of
  quasi-split classical groups}, Trans. Amer. Math. Soc. \textbf{368} (2016),
  no.~2, 1387--1410. \MR{3430367}

\bibitem{gk82}
S.~S. Gelbart and A.~W. Knapp, \emph{{$L$}-indistinguishability and {$R$}
  groups for the special linear group}, Adv. in Math. \textbf{43} (1982),
  no.~2, 101--121.

\bibitem{goldberg-class}
David Goldberg, \emph{Reducibility of induced representations for {${\rm
  Sp}(2n)$} and {${\rm SO}(n)$}}, Amer. J. Math. \textbf{116} (1994), no.~5,
  1101--1151. \MR{1296726 (95g:22016)}

\bibitem{go06}
\bysame, \emph{Reducibility for {${\rm SU}_n$} and generic elliptic
  representations}, Canad. J. Math. \textbf{58} (2006), no.~2, 344--361.
  \MR{2209282 (2007d:22026)}

\bibitem{kmsw14}
Tasho Kaletha, Alberto Minguez, Sug~Woo Shin, and Paul-James White,
  \emph{Endoscopic classification of representations: Inner forms of unitary
  groups}, preprint; arXiv:1409.3731v2 [math.NT] (2014).

\bibitem{ks72}
A.~W. Knapp and E.~M. Stein, \emph{Irreducibility theorems for the principal
  series}, Conference on {H}armonic {A}nalysis ({U}niv. {M}aryland, {C}ollege
  {P}ark, {M}d., 1971), Springer, Berlin, 1972, pp.~197--214. Lecture Notes in
  Math., Vol. 266. \MR{0422512 (54 \#10499)}

\bibitem{kot84}
Robert~E. Kottwitz, \emph{Stable trace formula: cuspidal tempered terms}, Duke
  Math. J. \textbf{51} (1984), no.~3, 611--650. \MR{757954 (85m:11080)}

\bibitem{la85}
J.-P. Labesse, \emph{Cohomologie, {$L$}-groupes et fonctorialit\'e}, Compositio
  Math. \textbf{55} (1985), no.~2, 163--184. \MR{795713 (86j:11117)}

\bibitem{lan97olga}
R.~P. Langlands, \emph{Representations of abelian algebraic groups}, Pacific J.
  Math. (1997), no.~Special Issue, 231--250, Olga Taussky-Todd: in memoriam.
  \MR{1610871}

\bibitem{mok13}
Chung~Pang Mok, \emph{Endoscopic classification of representations of
  quasi-split unitary groups}, Mem. Amer. Math. Soc. \textbf{235} (2015),
  no.~1108, vi+248. \MR{3338302}

\bibitem{sil78}
Allan~J. Silberger, \emph{The {K}napp-{S}tein dimension theorem for {$p$}-adic
  groups}, Proc. Amer. Math. Soc. \textbf{68} (1978), no.~2, 243--246.
  \MR{0492091 (58 \#11245)}

\bibitem{tad92}
Marko Tadi{\'c}, \emph{Notes on representations of non-{A}rchimedean {${\rm
  SL}(n)$}}, Pacific J. Math. \textbf{152} (1992), no.~2, 375--396.

\bibitem{xu15}
Bin Xu, \emph{On a lifting problem of {L}-packets}, Compos. Math. \textbf{152}
  (2016), no.~9, 1800--1850. \MR{3568940}

\bibitem{xumathann}
\bysame, \emph{L-packets of quasisplit {$GSp(2n)$} and {$GO(2n)$}}, Math. Ann.
  \textbf{370} (2018), no.~1-2, 71--189. \MR{3747484}

\bibitem{yu09}
Jiu-Kang Yu, \emph{On the local {L}anglands correspondence for tori}, Ottawa
  lectures on admissible representations of reductive {$p$}-adic groups, Fields
  Inst. Monogr., vol.~26, Amer. Math. Soc., Providence, RI, 2009, pp.~177--183.
  \MR{2508725 (2009m:11201)}

\end{thebibliography}
\end{document}